\def\hhmm{\number\hh:\ifnum\mm<10{}0\fi\number\mm}
\newcommand{\periodafter}[1]{\ifstrempty{#1}{}{#1.}}
\titleformat{\section}[block]{\scshape\filcenter\LARGE}{\thesection.}{.5em}{}
\titleformat{\subsection}[block]{\bfseries\filcenter\large}{\thesubsection.}{.5em}{\medskip}
\titleformat{\subsubsection}[runin]{\bfseries}{\thesubsubsection.}{.5em}{\periodafter}
\titlespacing{\subsubsection}{0pt}{\topsep}{.5em}
\newtheoremstyle{ntheorem}%
	{\topsep}{\topsep}{\itshape}{0pt}{\bfseries}{.}{.5em}%
	{\thmnumber{#2.\hspace{.5em}}\thmname{#1}\thmnote{ (#3)}}
\newtheoremstyle{ndefinition}%
	{\topsep}{\topsep}{\normalfont}{0pt}{\bfseries}{.}{.5em}%
	{\thmnumber{#2.\hspace{.5em}}\thmname{#1}\thmnote{ (#3)}}
\newtheoremstyle{nremark}%
	{\topsep}{\topsep}{\normalfont}{0pt}{\itshape}{.}{.5em}%
	{\thmnumber{}\thmname{#1}\thmnote{ (#3)}}
\theoremstyle{ntheorem}
  	\newtheorem{theorem}[subsubsection]{Theorem}
  	\newtheorem{proposition}[subsubsection]{Proposition}
	\newtheorem{lemma}[subsubsection]{Lemma}
  	\newtheorem{corollary}[subsubsection]{Corollary}
\theoremstyle{ndefinition}
	\newtheorem{remark}[subsubsection]{Remark}
	\edef\Drop@@{%
		\dimen@=#1\relax
		\dimen@=.5\dimen@
		\A@=-\sinDirection\dimen@
		\B@=\cosDirection\dimen@
		\setboxz@h{%
			\setbox2=\hbox{\kern3\A@\raise3\B@\copy\z@}%
			\dp2=\z@ \ht2=\z@ \wd2=\z@ \box2
			\setbox2=\hbox{\kern\A@\raise\B@\copy\z@}%
			\dp2=\z@ \ht2=\z@ \wd2=\z@ \box2
			\setbox2=\hbox{\kern-\A@\raise-\B@\copy\z@}%
			\dp2=\z@ \ht2=\z@ \wd2=\z@ \box2
			\setbox2=\hbox{\kern-3\A@\raise-3\B@ \noexpand\boxz@}%
			\dp2=\z@ \ht2=\z@ \wd2=\z@ \box2
		}%
		\ht\z@=\z@ \dp\z@=\z@ \wd\z@=\z@ \noexpand\styledboxz@
	}%
\xydef@\Tttip@{\kern2pt \vrule height2pt depth2pt width\z@
	\Tttip@@ \kern2pt \egroup
	\U@c=0pt \D@c=0pt \L@c=0pt \R@c=0pt \Edge@c={\circleEdge}%
	\def\Leftness@{.5}\def\Upness@{.5}%
	\def\Drop@@{\styledboxz@}\def\Connect@@{\straight@{\dottedSpread@\jot}}}
\xydef@\Tttip@@{%
	\dimen@=.25\dimen@
 	\B@=\cosDirection\dimen@
	\setboxz@h\bgroup\reverseDirection@\line@ \wdz@=\z@ \ht\z@=\z@ \dp\z@=\z@
	{\vDirection@(1,-1)\xydashl@ \xyatipfont\char\DirectionChar}%
	{\vDirection@(1,+1)\xydashl@ \xybtipfont\char\DirectionChar}%
}
\xydef@\ar@form{
	\ifx \space@\next \expandafter\DN@\space{\xyFN@\ar@form}%
	\else\ifx ^\next \DN@ ^{\xyFN@\ar@style}\edef\arvariant@@{\string^}%
	\else\ifx _\next \DN@ _{\xyFN@\ar@style}\edef\arvariant@@{\string_}%
	\else\ifx 0\next \DN@ 0{\xyFN@\ar@style}\def\arvariant@@{0}%
	\else\ifx 1\next \DN@ 1{\xyFN@\ar@style}\def\arvariant@@{1}%
	\else\ifx 2\next \DN@ 2{\xyFN@\ar@style}\def\arvariant@@{2}%
	\else\ifx 3\next \DN@ 3{\xyFN@\ar@style}\def\arvariant@@{3}%
	\else\ifx 4\next \DN@ 4{\xyFN@\ar@style}\def\arvariant@@{4}%
	\else\ifx \bgroup\next \let\next@=\ar@style
	\else\ifx [\next \DN@[##1]{\ar@modifiers{[##1]}}
	\else\ifx *\next \DN@ *{\ar@modifiers}%
	\else\addLT@\ifx\next \let\next@=\ar@slide
	\else\ifx /\next \let\next@=\ar@curveslash
	\else\ifx (\next \let\next@=\ar@curveinout 
	\else\addRQ@\ifx\next \addRQ@\DN@{\ar@curve@}%
	\else\addLQ@\ifx\next \addLQ@\DN@{\xyFN@\ar@curve}%
	\else\addDASH@\ifx\next \addDASH@\DN@{\defarstem@-\xyFN@\ar@}%
	\else\addEQ@\ifx\next \addEQ@\DN@{\def\arvariant@@{2}\defarstem@-\xyFN@\ar@}%
	\else\addDOT@\ifx\next \addDOT@\DN@{\defarstem@.\xyFN@\ar@}%
	\else\ifx :\next \DN@:{\def\arvariant@@{2}\defarstem@.\xyFN@\ar@}%
	\else\ifx ~\next \DN@~{\defarstem@~\xyFN@\ar@}%
	\else\ifx !\next \DN@!{\dasharstem@\xyFN@\ar@}%
	\else\ifx ?\next \DN@?{\ar@upsidedown\xyFN@\ar@}%
	\else \let\next@=\ar@error
	\fi\fi\fi\fi\fi\fi\fi\fi\fi\fi\fi\fi\fi\fi\fi\fi\fi\fi\fi\fi\fi\fi\fi \next@}
\tikzset{%
    triple line/.code={\tikzset{%
            double equal sign distance, 
            double=\pgfkeysvalueof{/tikz/commutative diagrams/background color}}},
    quadruple line/.code={\tikzset{%
            double equal sign distance, 
            double=\pgfkeysvalueof{/tikz/commutative diagrams/background color}}},
    Rrightarrow/.code={\tikzcdset{triple line}\pgfsetarrows{tikzcd implies cap-tikzcd implies}},
    RRightarrow/.code={\tikzcdset{quadruple line}\pgfsetarrows{tikzcd implies cap-tikzcd implies}}
}
\def\ie{{\emph{i.e.}~}}
\newcommand{\set}[1]{\{#1\}}
\newcommand{\dual}[1]{\overline{#1}}
\definecolor{mred}{rgb}{0.7,0.1,0.1}
\definecolor{mblue}{rgb}{0,0,0.8}
\definecolor{mgreen}{rgb}{0,0.6,0.3}
\newcommand\Fact[1]{{F}#1}
\newcommand\map[3]{#1: #2 \rightarrow #3}
\newcommand\gpo[1]{((#1),\mu,\eta, (-)^{-1})}
\newcommand\fcat[1]{\overrightarrow{\boldsymbol\Pi}(#1)}
\newcommand\trace[3]{\overrightarrow{\mathfrak{T}}(#1)(#2,#3)}
\newcommand\sysh[2]{\overrightarrow{H}_{#1}(#2)}
\newcommand\sysp[2]{\overrightarrow{P}_{\!\!#1}(#2)}
\newcommand\Cr{\mathcal{C}}
\newcommand\Er{\mathcal{E}}
\newcommand\Br{\mathcal{B}}
\newcommand\M[0]{\mathcal{M}}
\newcommand\N[0]{\mathcal{N}}
\newcommand\lF[0]{\mathcal{F}}
\newcommand\V{\mathbf{V}}
\newcommand\twoB[0]{\Br^{2}}
\newcommand\A[0]{\mathcal{A}}
\newcommand\diP[1]{\overrightarrow{\mathbf{P}}\!(#1)}
\newcommand\Kernel[1]{ Ker(#1) }
\newcommand\Crok[1]{Cok(#1)}
\newcommand{\tr}[1]{\overline{#1}}
\newcommand{\fl}{\rightarrow}
\newcommand{\dfl}{\Rightarrow}
\newcommand{\Xr}{\mathcal{X}}
\newcommand{\Yr}{\mathcal{Y}}
\newcommand{\Ar}{\mathcal{A}}
\newcommand{\dcyl}[1]{#1 \times\!\! \uparrow\!\! I}
\def\catego#1{{\bf{\sf #1}}}
\newcommand\cat{\catego{Cat}}
\newcommand\catsigmab[0]{\cat_{\Br_0}/ \Br}
\newcommand\catsigma[0]{\cat_{X}}
\newcommand\catxp[0]{{\cat_{X}/ \diP{\Xr}}}
\newcommand\ab{\catego{Ab}}
\newcommand\gp{\catego{Gp}}
\newcommand\gpos[1]{\gp(#1)}
\newcommand\abo[1]{\ab(#1)}
\newcommand\act{\catego{Act}}
\newcommand\setcat{\catego{Set}}
\newcommand\psetcat{\setcat_{*}}
\newcommand\natsys[2]{\catego{NatSys}(#1,#2)}
\newcommand\natsysnu[2]{\catego{NatSys}_{\nu}(#1,#2)}
\newcommand\laxsys[2]{\catego{LaxSys}(#1,#2)}
\newcommand\opnat[1]{\catego{opNat}(#1)}
\newcommand\opnatnu[1]{\catego{opNat}_{\nu}(#1)}
\renewcommand{\top}{\catego{Top}}
\newcommand{\dtop}{\catego{dTop}}
\begin{document}
\thispagestyle{empty}

\begin{center}

\begin{doublespace}
\begin{huge}
{\scshape Time-reversal homotopical properties}

{\scshape of concurrent systems}
\end{huge}

\bigskip
\hrule height 1.5pt 
\bigskip

\begin{Large}
{\scshape Cameron Calk \qquad Eric Goubault \qquad Philippe Malbos}
\end{Large}
\end{doublespace}

\vspace{3cm}

\begin{small}\begin{minipage}{14cm}
\noindent\textbf{Abstract --}
Directed topology was introduced as a model of concurrent programs, where the flow of time is described by distinguishing certain paths in the topological space representing such a program. Algebraic invariants which respect this directedness have been introduced to classify directed spaces. In this work we study the properties of such invariants with respect to the reversal of the flow of time in directed spaces. Known invariants, natural homotopy and homology, have been shown to be unchanged under this time-reversal. We show that these can be equipped with additional algebraic structure witnessing this reversal. Specifically, when applied to a directed space and to its reversal, we show that these refined invariants yield dual objects. We further refine natural homotopy by introducing a notion of relative directed homotopy and showing the existence of a long exact sequence of natural homotopy systems.

\medskip

\smallskip\noindent\textbf{Keywords --} Directed spaces. Concurrent systems. Time-reversibility. Natural homology and natural homotopy.

\smallskip\noindent\textbf{M.S.C. 2010 --} 68Q85, 18D35, 55U99.
\end{minipage}\end{small}

\vspace{1cm}

\begin{small}\begin{minipage}{12cm}
\renewcommand{\contentsname}{}
\setcounter{tocdepth}{2}
\tableofcontents
\end{minipage}
\end{small}
\end{center}

\clearpage

\section{Introduction}

\subsection{Time-reversal properties of concurrent systems}

Directed topology was originally introduced as a model, and a tool, for studying
and classifying concurrent systems in computer science \cite{Pratt,CONCUR92}. 
In this approach, the possible states of several processes running concurrently are modeled as points in a topological space of configurations, in which executions are described by paths. Restricted areas appear when these processes have to synchronize, to perform a joint task, or to use a shared object that cannot be shared by more than a certain number of processes.
It is natural to study the homotopical properties of this configuration space in order to deduce some interesting properties
of the parallel programs involved, for verification purposes, or for classifying 
synchronization primitives. A usual model for concurrent processes is actually that of higher-dimensional automata, which are based on (pre-)cubical sets, and are the most expressive known models in concurrency theory \cite{Glabbeek}. 
But contrarily to ordinary algebraic topology, the invariants of interest are invariants
under some form of continuous deformation, but which has to respect the flow of time. 
In short, the only valid homotopies are the ones which never invert the flow of time. 
For mathematical developments and some applications we refer the reader to the two books 
\cite{grandisbook,fajstrup16}. 

\subsubsection*{Directed spaces and concurrent programs}
Directed topological invariants, most notably the computationally tractable ones such
as homology, have been long in the making (starting again with \cite{CONCUR92}). Most
directed homology theories have proven too weak to classify essential features of 
directed topology, until the proposal \cite{Eilenberg,naturalhomology}. Let us 
review quickly the main idea from \cite{naturalhomology}. 
Recall from~\cite{grandisbook} that a \emph{directed space}, or a \emph{dispace} for short, is a pair $\Xr=(X, dX)$, where $X$ is a topological space and $dX$ is a set of \emph{paths} in $X$, i.e., continuous maps from~$[0,1]$ to~$X$, called \emph{directed paths}, of \emph{dipaths} for short, such that every constant path is directed, and such that $dX$ is closed under monotonic reparametrization and concatenation.

Partially-ordered spaces, or pospaces, form particular dispaces~: these are topological spaces $X$
equipped with a partial order $\leq$ on $X$ which is closed under
the product topology. The directed structure is thus given by paths $p~: [0,1]
\rightarrow X$ such that $p(s) \leq p(t)$, for all $s \leq t$ in $[0,1]$. 
Another useful class of dispaces is given by the directed geometric realization
of finite precubical sets, see e.g. \cite{frgatc}. These are made of gluings of
cubical cells, on which the dispace structure is locally that of a particular partially-ordered space~: each $n$-dimensional cell is identified with $[0,1]^n$ ordered componentwise. This last
class is in particular very useful in applications to concurrency and distributed
systems theory, see e.g. \cite{fajstrup16}. 

As an example, we have depicted two dispaces in Figure \ref{F:ExamplesNon-EquivalentConcurrentPrograms}, which are built as the gluing of squares (the white ones), each of which is equipped with the product ordering of $\mathbb{R}^2$. 
\begin{figure}[h]
\centering
\begin{tikzpicture}[auto,scale = 0.8]
\node (al) at (0,0) {$\bullet$};
\node (be) at (4,4) {$\bullet$};
\node (alp) at (-0.3,-0.3) {\scriptsize{$\alpha_X$}};
\node (bet) at (4.3,4.3) {\scriptsize{$\beta_X$}};
\node (a) at (1.8,-0.8) {\scriptsize{$\Xr = (X,dX)$}};
\draw (0,0) rectangle (1.5,1.5);
\draw (1.5,1.5) rectangle (4,4);
\draw [fill = gray!50,draw = gray!50] (0.5,0.5) rectangle (1,1);
\draw [fill = gray!50,draw = gray!50] (2,3) rectangle (2.5,3.5);
\draw [fill = gray!50,draw = gray!50] (3,2) rectangle (3.5,2.5);
\draw [dotted] (1,0) to (1,1.5);
\draw [dotted] (0.5,0) to (0.5,1.5);
\draw [dotted] (0,1) to (1.5,1);
\draw [dotted] (0,0.5) to (1.5,0.5);
\draw [dotted] (2,1.5) to (2,4);
\draw [dotted] (2.5,1.5) to (2.5,4);
\draw [dotted] (3,1.5) to (3,4);
\draw [dotted] (3.5,1.5) to (3.5,4);
\draw [dotted] (1.5,2) to (4,2);
\draw [dotted] (1.5,2.5) to (4,2.5);
\draw [dotted] (1.5,3) to (4,3);
\draw [dotted] (1.5,3.5) to (4,3.5);
\draw (0.75,-0.1) to (0.75,0.1);
\draw (-0.1,0.75) to (0.1,0.75);
\draw (2.25,1.4) to (2.25,1.6);
\draw (3.25,1.4) to (3.25,1.6);
\draw (1.4,2.25) to (1.6,2.25);
\draw (1.4,3.25) to (1.6,3.25);
\node (S11) at (0.75,-0.3) {\footnotesize{S}};
\node (U21) at (-0.3,0.75) {\footnotesize{U}};
\node (U11) at (2.25,1.2) {\footnotesize{U}};
\node (S12) at (3.25,1.2) {\footnotesize{S}};
\node (U22) at (1.2,2.25) {\footnotesize{U}};
\node (S21) at (1.2,3.25) {\footnotesize{S}};
\draw [thick] (0,0) .. controls (0.3,1.5)  .. (1.5,1.5);
\draw [thick] (0,0) .. controls (1.5,0.3)  .. (1.5,1.5);
\draw [thick] (1.5,1.5) to (4,4);
\draw [thick] (1.5,1.5) .. controls (1.6,4)  .. (4,4);
\draw [thick] (1.5,1.5) .. controls (4,1.6)  .. (4,4);
\end{tikzpicture}
\begin{tikzpicture}
\draw[color = white] (0,-1) rectangle (0.5,-1.1);
\end{tikzpicture}
\begin{tikzpicture}[auto,scale = 0.85]
\node (al) at (0,0) {$\bullet$};
\node (be) at (2.5,2.5) {$\bullet$};
\node (alp) at (-0.3,-0.3) {\scriptsize{$\alpha_Y$}};
\node (bet) at (2.8,2.8) {\scriptsize{$\beta_Y$}};
\node (a) at (1.3,-0.8) {\scriptsize{$\mathcal{Y} = (Y,dY)$}};
\draw (0,0) rectangle (2.5,2.5);
\draw [fill = gray!50,draw = gray!50] (0.5,0.5) rectangle (1,1);
\draw [fill = gray!50,draw = gray!50] (0.5,1.5) rectangle (1,2);
\draw [fill = gray!50,draw = gray!50] (1.5,0.5) rectangle (2,1);
\draw [fill = gray!50,draw = gray!50] (1.5,1.5) rectangle (2,2);
\draw [dotted] (1,0) to (1,2.5);
\draw [dotted] (0.5,0) to (0.5,2.5);
\draw [dotted] (1.5,0) to (1.5,2.5);
\draw [dotted] (2,0) to (2,2.5);
\draw [dotted] (0,1) to (2.5,1);
\draw [dotted] (0,0.5) to (2.5,0.5);
\draw [dotted] (0,1.5) to (2.5,1.5);
\draw [dotted] (0,2) to (2.5,2);
\draw (0.75,-0.1) to (0.75,0.1);
\draw (-0.1,0.75) to (0.1,0.75);
\draw (1.75,-0.1) to (1.75,0.1);
\draw (-0.1,1.75) to (0.1,1.75);
\node (S11) at (0.75,-0.3) {\footnotesize{S}};
\node (U21) at (-0.3,0.75) {\footnotesize{U}};
\node (S12) at (1.75,-0.3) {\footnotesize{S}};
\node (U22) at (-0.3,1.75) {\footnotesize{U}};
\draw [thick] (0,0) .. controls (0.2,1.25)  .. (1.25,1.25);
\draw [thick] (0,0) .. controls (1.25,0.2)  .. (1.25,1.25);
\draw [thick] (2.5,2.5) .. controls (1.25,2.3)  .. (1.25,1.25);
\draw [thick] (2.5,2.5) .. controls (2.3,1.25)  .. (1.25,1.25);
\draw [thick] (0,0) .. controls (0.1,2.5)  .. (2.5,2.5);
\draw [thick] (0,0) .. controls (2.5,0.1)  .. (2.5,2.5);
\node (b) at (0,-1) {};
\end{tikzpicture}
\begin{tikzpicture}
\draw[color = white] (0,-1) rectangle (0.5,-1.1);
\end{tikzpicture}
\caption{Examples of pospaces coming from non-equivalent concurrent programs.}
\label{F:ExamplesNon-EquivalentConcurrentPrograms}
\end{figure}
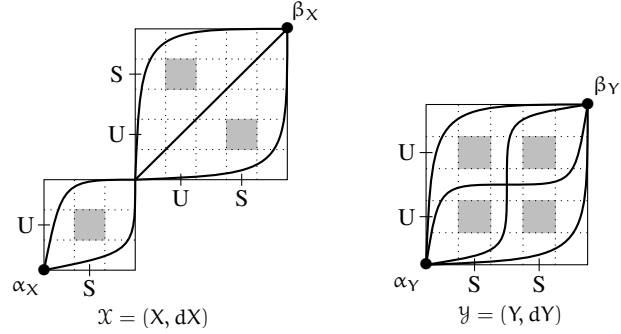
They are the directed geometric realization 
of certain precubical sets, as we mentioned above, 
i.e. are higher-dimensional automata in the sense of~\cite{Pratt}. 
They are not dihomeomorphic spaces since they are already non homotopy equivalent: the fundamental group of 
the leftmost one, that we call $X$, is the free abelian group on three generators, whereas the fundamental group of
the rightmost one, that we call $Y$, is the free abelian group on four generators. 
Consider now the topological space of dipaths, with the compact-open topology,
from the lowest point of $X$ (resp. $Y$), which
we denote by $\alpha_X$ (resp. $\alpha_Y$), to the highest point of $X$ (resp. $Y$), which we denote by~$\beta_X$ (resp. $\alpha_Y$). 
The topological space $\overrightarrow{Di}(\Xr)(\alpha_X,\beta_X)$ of directed paths
from $\alpha_X$ to $\beta_X$, is homotopy equivalent to a six point space, corresponding to the six dihomotopy classes of dipaths pictured in Figure~\ref{F:ExamplesNon-EquivalentConcurrentPrograms}. The topological space $\overrightarrow{Di}(\Yr)(\alpha_Y,\beta_Y)$ is also homotopy
equivalent to a six point space, corresponding again to the six dihomotopy classes of dipaths pictured
in Figure \ref{F:ExamplesNon-EquivalentConcurrentPrograms}. 
However, these two dispaces should not be considered as equivalent, in the sense that they correspond to distinct concurrent programs. Therefore comparing spaces of dipaths exclusively between two particular points in each space is not sufficient for distinguishing these dispaces.

\subsubsection*{Natural homotopy}
The main idea of \cite{naturalhomology} is to encode how the homotopy types of the spaces of directed paths vary when we move the end points. With the possibility to consider all the directed path spaces,  we can distinguish the two former pospaces because if we consider the space of directed paths between $\alpha_X$ and $\beta'_X$, as in Figure \ref{changebasepoints}, it has the homotopy type of a discrete space with four points. Furthermore, we can show that in $Y$, there is no pair of points between which we have a directed path space with that homotopy type.
The algebraic structure which logs all of the homotopy types of the directed
path spaces between each pair of points is that of a natural system, see Section \ref{SS:NaturalSystemsWithCompositionPairing}. 
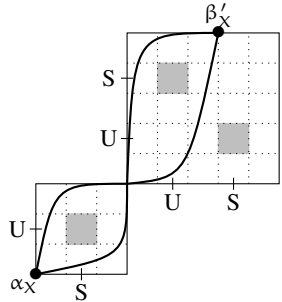
\begin{figure}[h]
\centering
\begin{tikzpicture}[auto,scale = 0.8]
\draw (0,0) rectangle (1.5,1.5);
\draw (1.5,1.5) rectangle (4,4);
\draw [fill = gray!50,draw = gray!50] (0.5,0.5) rectangle (1,1);
\draw [fill = gray!50,draw = gray!50] (2,3) rectangle (2.5,3.5);
\draw [fill = gray!50,draw = gray!50] (3,2) rectangle (3.5,2.5);
\draw [dotted] (1,0) to (1,1.5);
\draw [dotted] (0.5,0) to (0.5,1.5);
\draw [dotted] (0,1) to (1.5,1);
\draw [dotted] (0,0.5) to (1.5,0.5);
\draw [dotted] (2,1.5) to (2,4);
\draw [dotted] (2.5,1.5) to (2.5,4);
\draw [dotted] (3,1.5) to (3,4);
\draw [dotted] (3.5,1.5) to (3.5,4);
\draw [dotted] (1.5,2) to (4,2);
\draw [dotted] (1.5,2.5) to (4,2.5);
\draw [dotted] (1.5,3) to (4,3);
\draw [dotted] (1.5,3.5) to (4,3.5);
\draw (0.75,-0.1) to (0.75,0.1);
\draw (-0.1,0.75) to (0.1,0.75);
\draw (2.25,1.4) to (2.25,1.6);
\draw (3.25,1.4) to (3.25,1.6);
\draw (1.4,2.25) to (1.6,2.25);
\draw (1.4,3.25) to (1.6,3.25);
\node (S11) at (0.75,-0.3) {\footnotesize{S}};
\node (U21) at (-0.3,0.75) {\footnotesize{U}};
\node (U11) at (2.25,1.2) {\footnotesize{U}};
\node (S12) at (3.25,1.2) {\footnotesize{S}};
\node (U22) at (1.2,2.25) {\footnotesize{U}};
\node (S21) at (1.2,3.25) {\footnotesize{S}};
\draw [thick] (0,0) .. controls (0.3,1.5)  .. (1.5,1.5);
\draw [thick] (0,0) .. controls (1.5,0.3)  .. (1.5,1.5);
\draw [thick] (1.5,1.5) .. controls (1.6,4)  .. (3,4);
\draw [thick] (1.5,1.5) .. controls (2.5,1.6)  .. (3,4);
\node (al) at (0,0) {$\bullet$};
\node (be) at (3,4) {$\bullet$};
\node (alp) at (-0.2,-0.2) {\scriptsize{$\alpha_X$}};
\node (bet) at (3,4.3) {\scriptsize{$\beta'_X$}};
\end{tikzpicture}
\caption{Changing the base points to exhibit a particular space of directed paths.}\label{changebasepoints}
\end{figure}

\subsubsection*{Time reversal invariance}
Now, let us consider the concurrent program semantics, and its model as a directed
space $\Xr=(X,dX)$, pictured in Figure \ref{fig:timereversal}, and invert the time flow. If we orient the time flow from left to right and from bottom to top, we need to rotate its representation as a dispace, as shown right of Figure~\ref{fig:timereversal}. As concurrent processes, these two programs should not be considered as equivalent under any form of well accepted equivalence. These two concurrent programs actually have equivalent prime event structure representations, see \cite{CAT0}, that are not bisimulation equivalent \cite{Bisimulation} under
any kind of sensible bisimulation.
L. Fajstrup and K. Hess noted that natural homotopy and homology theories do not
distinguish between these two cases, but produce isomorphic natural systems. 

It is one purpose of this paper to show that natural homotopy and homology theories
lack an algebraic ingredient, a form of composition, that will make the invariants associated to these two dispaces non-isomorphic, but rather dual to each other. 
This composition was introduced by Porter \cite{Porter2} in order to link natural systems seen as coefficients of generalized cohomology theories \cite{BauesWirsching} to coefficients for Quillen cohomology theories \cite{quillencohomology}. 
This is explained, for completeness purposes, in the
first part of this article. 

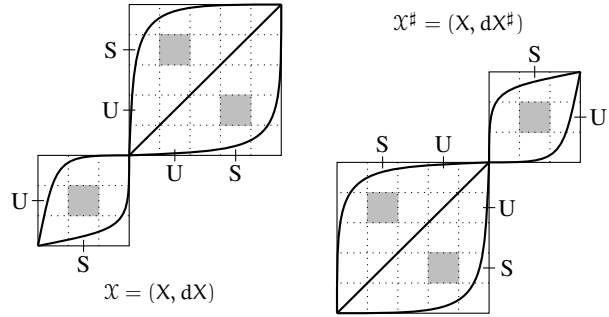
\begin{figure}[h]
\centering
\begin{tikzpicture}[auto,scale = 0.8]
\node (a) at (2,-0.8) {\scriptsize{$\Xr = (X,dX)$}};
\draw (0,0) rectangle (1.5,1.5);
\draw (1.5,1.5) rectangle (4,4);
\draw [fill = gray!50,draw = gray!50] (0.5,0.5) rectangle (1,1);
\draw [fill = gray!50,draw = gray!50] (2,3) rectangle (2.5,3.5);
\draw [fill = gray!50,draw = gray!50] (3,2) rectangle (3.5,2.5);
\draw [dotted] (1,0) to (1,1.5);
\draw [dotted] (0.5,0) to (0.5,1.5);
\draw [dotted] (0,1) to (1.5,1);
\draw [dotted] (0,0.5) to (1.5,0.5);
\draw [dotted] (2,1.5) to (2,4);
\draw [dotted] (2.5,1.5) to (2.5,4);
\draw [dotted] (3,1.5) to (3,4);
\draw [dotted] (3.5,1.5) to (3.5,4);
\draw [dotted] (1.5,2) to (4,2);
\draw [dotted] (1.5,2.5) to (4,2.5);
\draw [dotted] (1.5,3) to (4,3);
\draw [dotted] (1.5,3.5) to (4,3.5);
\draw (0.75,-0.1) to (0.75,0.1);
\draw (-0.1,0.75) to (0.1,0.75);
\draw (2.25,1.4) to (2.25,1.6);
\draw (3.25,1.4) to (3.25,1.6);
\draw (1.4,2.25) to (1.6,2.25);
\draw (1.4,3.25) to (1.6,3.25);
\node (S11) at (0.75,-0.3) {\footnotesize{S}};
\node (U21) at (-0.3,0.75) {\footnotesize{U}};
\node (U11) at (2.25,1.2) {\footnotesize{U}};
\node (S12) at (3.25,1.2) {\footnotesize{S}};
\node (U22) at (1.2,2.25) {\footnotesize{U}};
\node (S21) at (1.2,3.25) {\footnotesize{S}};
\draw [thick] (0,0) .. controls (0.3,1.5)  .. (1.5,1.5);
\draw [thick] (0,0) .. controls (1.5,0.3)  .. (1.5,1.5);
\draw [thick] (1.5,1.5) to (4,4);
\draw [thick] (1.5,1.5) .. controls (1.6,4)  .. (4,4);
\draw [thick] (1.5,1.5) .. controls (4,1.6)  .. (4,4);
\end{tikzpicture}
\begin{tikzpicture}
\draw[color = white] (0,-1) rectangle (0.5,-1.1);
\end{tikzpicture}
\begin{tikzpicture}[auto,scale = 0.8,rotate=180]
\node (a) at (2,-0.8) {\scriptsize{$\Xr^\sharp = (X,dX^\sharp)$}};
\draw (0,0) rectangle (1.5,1.5);
\draw (1.5,1.5) rectangle (4,4);
\draw [fill = gray!50,draw = gray!50] (0.5,0.5) rectangle (1,1);
\draw [fill = gray!50,draw = gray!50] (2,3) rectangle (2.5,3.5);
\draw [fill = gray!50,draw = gray!50] (3,2) rectangle (3.5,2.5);
\draw [dotted] (1,0) to (1,1.5);
\draw [dotted] (0.5,0) to (0.5,1.5);
\draw [dotted] (0,1) to (1.5,1);
\draw [dotted] (0,0.5) to (1.5,0.5);
\draw [dotted] (2,1.5) to (2,4);
\draw [dotted] (2.5,1.5) to (2.5,4);
\draw [dotted] (3,1.5) to (3,4);
\draw [dotted] (3.5,1.5) to (3.5,4);
\draw [dotted] (1.5,2) to (4,2);
\draw [dotted] (1.5,2.5) to (4,2.5);
\draw [dotted] (1.5,3) to (4,3);
\draw [dotted] (1.5,3.5) to (4,3.5);
\draw (0.75,-0.1) to (0.75,0.1);
\draw (-0.1,0.75) to (0.1,0.75);
\draw (2.25,1.4) to (2.25,1.6);
\draw (3.25,1.4) to (3.25,1.6);
\draw (1.4,2.25) to (1.6,2.25);
\draw (1.4,3.25) to (1.6,3.25);
\node (S11) at (0.75,-0.3) {\footnotesize{S}};
\node (U21) at (-0.3,0.75) {\footnotesize{U}};
\node (U11) at (2.25,1.2) {\footnotesize{U}};
\node (S12) at (3.25,1.2) {\footnotesize{S}};
\node (U22) at (1.2,2.25) {\footnotesize{U}};
\node (S21) at (1.2,3.25) {\footnotesize{S}};
\draw [thick] (0,0) .. controls (0.3,1.5)  .. (1.5,1.5);
\draw [thick] (0,0) .. controls (1.5,0.3)  .. (1.5,1.5);
\draw [thick] (1.5,1.5) to (4,4);
\draw [thick] (1.5,1.5) .. controls (1.6,4)  .. (4,4);
\draw [thick] (1.5,1.5) .. controls (4,1.6)  .. (4,4);
\end{tikzpicture}
\begin{tikzpicture}
\draw[color = white] (0,-1) rectangle (0.5,-1.1);
\end{tikzpicture}
\caption{The dispace of a concurrent program and its time-reversed dispace.}
\label{fig:timereversal}
\end{figure}

\subsection{Main results and organisation of the article}

Let us present our main results and the organization of the article.
In the next section we recall categorical preliminaries used in our constructions. In Subsections~\ref{SS:InternalGroups} and ~\ref{SS:CategoryActions}, we recall the notion of internal group over a category and the category $\act$ of actions as introduced by Grandis in~\cite{marco2013homological}. We recall also the embeddings of the categories $\gp$ of groups and $\psetcat$ of pointed sets into the category $\act$. These embeddings preserve exactness of sequences when $\act$ is endowed with the structure of a homological category presented in~\cite{marco2013homological}.
In Subsection~\ref{SS:NaturalSystemsWithCompositionPairing}, we recall the notion of natural system, central in this work. These were introduced in \cite{Quillen72} and used as coefficients for cohomology of small categories in~\cite{BauesWirsching85} and monoids in~\cite{Leech85}, as well as to define homological finiteness invariants for convergent rewriting systems in~ \cite{GuiraudMalbos12advances, GuiraudMalbos10smf}.
A \emph{natural system on a category $\Cr$} with values in a category $\V$ is a functor $D : \Fact{\Cr} \fl \V$, where~$\Fact{\Cr}$ is the category of factorization of $\Cr$ whose $0$-cells are the $1$-cells of $\Cr$ and the $1$-cells correspond to factorizations of $1$-cells in $\Cr$, see~\ref{SSS:NaturalSystem}.
We denote by $\opnat{\V}$ the category of pairs $(\Cr,D)$ where $\Cr$ is a category and $D$ is a natural system on  $\Cr$ with values in $\V$.
The category of natural systems on a category $\Cr$ with values in the category $\ab$ of Abelian groups is equivalent to the category of internal groups in the category $\cat_{\Cr_0}/\Cr$ of categories over $\Cr$. In order to extend such an equivalence to natural systems with values in the category $\gp$ of groups, Porter in \cite{Porter2} considers natural systems enriched with composition pairings. Specifically, given a natural system $D : \Fact{\Cr} \fl \V$, a \emph{composition pairing} associated to $D$ consists of families
\[
\nu_{f,g} : D_f\times D_g \fl D_{fg}
\qquad
\nu_x : T \fl D_{1_x},
\]
of morphisms of $\V$ indexed by $1$-cells $f,g$ a $0$-cell $x$ in $\Cr$, satisfying coherence conditions as recalled in~\ref{SSS:NaturalSystemsCompositionPairing}. Porter showed that the category of natural systems on a category $\Cr$ with values in the category of groups and with composition pairings is equivalent to the category of internal groups in the category of categories over $\Cr$. We recall this equivalence in~\ref{S:LaxSystemToInternalGroups} and explain in \ref{S:SplitObject} that such an equivalence can equally be established for natural systems with values in the category $\psetcat$ by considering split objects in the category of categories over $\Cr$.
Finally, in Subsection~\ref{SS:NatLax} we recall the definition of lax functors and their relation to natural systems with composition pairings, again from \cite{Porter2}.

The aim of Section~\ref{S:DirectedHomotopyInternalGroup} is to relate the notion of directed homotopy of directed spaces to certain internal groups, refining this invariant of directed spaces. We recall notions from directed algebraic topology in Subsection~\ref{subsec:dihom}. In particular, we define the functor $\overrightarrow{\mathbf{P}} : \dtop \fl \cat$ which associates to a dispace $\Xr$ the \emph{trace category of $\Xr$}, whose $0$-cells are points of $X$, $1$-cells are traces of $\Xr$, \ie classes of dipaths of $\Xr$ modulo reparametrization, in which composition is given by concatenation of traces.
We are interested in the properties of the natural homotopy and natural homology functors as introduced in~\cite{Eilenberg,dubutthesis}, see~\ref{SSS:NaturalHomotopyNaturalHomology}.
The functors $\sysp{n}{\Xr}$ and $\sysh{n}{\Xr}$, for a dispace $\Xr$, are natural systems extending the homotopy and homology functors on topological spaces to directed spaces. They extend to functors 
\[
\overrightarrow{P}_{\!\!n} : \dtop \fl \opnat{\act},
\quad\text{and}\quad
\overrightarrow{H}_n : \dtop \fl \opnat{\ab},
\]
sending a dispace $\Xr$ to $(\diP{\Xr},\sysp{n}{\Xr})$ and $(\diP{\Xr},\sysh{n}{\Xr})$ respectively.
In Subsection~\ref{SS:DirectedHomotopyAsInternalGroup}, we show that the natural systems $\sysp{n}{\Xr}$ and $\sysh{n}{\Xr}$ admit composition pairings. This additional structure allows us to relate the natural systems $\sysp{n}{\Xr}$ and $\sysh{n}{\Xr}$ to internal groups or split objects in the category~$\textbf{Cat}_{X}/ \diP{\Xr}$, giving the main result of this section:
\begin{quote}
\noindent \textbf{Theorem~\ref{T:CompositionPairingOnPinX}.}
\emph{
Let $\Xr=(X,dX)$ be a dispace.
For each $n\leq 1$ (resp.~$n\geq 2$) there exists a split object $\Cr^n_\Xr$ (resp.~internal group $\Cr^n_\Xr$) in $\catxp$ such that}
\[
\sysp{n}{\Xr}_f = (\Cr^n_\Xr)_f,
\]
\emph{for all traces $f$ of $\Xr$, and this assignment is functorial in $\Xr$.}
\end{quote}
In this way, we have defined a functor $\Cr^n_- : \dtop \fl \cat$.
We explain in \ref{SSS:NaturalHomologyAsInternalGroup} that for all $n\geq 1$, the natural system $\sysh{n}{\Xr}$ is naturally equipped with a composition pairing, and thus can be interpreted as an internal abelian group $\Ar_\Xr^n$ in the category $\catxp$. Moreover, the assignment $\Ar_-^n:\dtop\fl\cat$ is functorial for all $n\geq 1$.
Finally, in Subsection~\ref{SS:FundamentalCategory} we recall the notion of fundamental category from~\cite{grandisbook} and provide a result relating it to natural homotopy.

In Section~\ref{S:TimeReversalInvariance} we examine the effect of time-reversal on dispaces and study the behaviour of natural homology and natural homotopy on dispaces with respect to time-reversal. First, we define the time-reversed, or opposite, dispace of a dispace $\Xr$ as the dispace $\Xr^\sharp=(X,dX^\sharp)$, where $dX^\sharp$ is the set of time-reversed dipaths in $X$. 
For every $n\geq 0$, we explicit an isomorphism
\[
I_n(\Xr): \Cr^n_{\Xr^\sharp} \Longrightarrow (\Cr^n_{\Xr})^o,
\]
which is natural in the $\Xr$,  showing the main result of this paper:
\begin{quote}
\noindent \textbf{Theorem~\ref{Theorem:MainTheoremA}.}
\emph{
For any $n\geq 0$, the functor $\Cr^n_{(-)}: \dtop \fl \cat$ is strongly time-reversal.
}
\end{quote}

Finally, in Subsection~\ref{subsec:relhom}, we introduce a notion of relative homotopy for dispaces, and establish a long exact sequence, as in the case of regular topological spaces, using the homological category structure on $\act$ as introduced by Grandis in~\cite{marco2013homological}:

\begin{quote}
\noindent \textbf{Theorem~\ref{T:MainTheoremB}.}
\emph{
Let $\Xr$ be a dispace and $\Ar$ be a directed subspace of $\Xr$.
There is an exact sequence in $\natsys{\diP{\Ar}}{\act}$:
\begin{align*}
\cdots 
\: \rightarrow \: \sysp{n}{\Ar} \: \rightarrow \: \sysp{n}{\Xr}\: \rightarrow \: & \sysp{n}{\Xr,\Ar} \: \overset{\partial_n}{\rightarrow} \: \sysp{n-1}{\Ar} \: \rightarrow \: \cdots \\
\cdots \: \rightarrow \: \sysp{2}{\Ar} \: \overset{v}{\rightarrow} \sysp{2}{\Xr} \: \overset{f}{\rightarrow} \: (\sysp{2}{\Xr,\Ar}, \sysp{2}{\Xr}) & \: \overset{g}{\rightarrow} \: \sysp{1}{\Ar} \overset{h}{\rightarrow} \: \sysp{1}{\Xr} \: \rightarrow \: \sysp{1}{\Xr,\Ar} \: \rightarrow \: 0.
\end{align*}
}
\end{quote}

\subsubsection*{Acknowledgement}
We wish to thank Kathryn Hess, Lisbeth Fajstrup and Timothy Porter for fruitful discussions and comments about this work. In particular, we thank Kathryn Hess and Lisbeth Fajstrup for pointing out that the natural system of directed homology is time-symmetric.

\section{Categorical preliminaries}
\label{S:CategoricalPreliminaries}

In this section we recall categorical constructions used in this article. First we recall the notion of an internal group over a category. Then, in Subsection~\ref{SS:CategoryActions} we recall the embeddings of the categories $\gp$ of groups and $\psetcat$ of pointed sets into the category $\act$ of actions, and their exactness properties as shown in~\cite{marco2013homological}.
In Subsection~\ref{SS:NaturalSystemsWithCompositionPairing}, we recall the notion of natural system on a category as well as the notion of composition pairing associated to a natural system, allowing the description of natural systems of groups in terms of internal groups over a category \cite{Porter2}. Finally, in Subsection~\ref{SS:NatLax} we recall the definition of lax functors and their relation to natural systems with composition pairings, again from \cite{Porter2}.

\subsection{Internal groups}
\label{SS:InternalGroups}

We denote by $\cat$ the category of (small) categories. For a category $\Cr$, we will denote by $\Cr_0$ its set of $0$-cells and by $\Cr_1$ its set of $1$-cells. Given a set $X$, we denote by $\catsigma$ the subcategory of $\cat$ consisting of those categories with $X$ as their set of $0$-cells, and in which we take only the functors which are the identity on $0$-cells. 
Given a category $\Br$, we denote by $\catsigmab$ the category whose objects are pairs $(\Cr, p)$, with~$\Cr$ in~$\cat_{\Br_0}$, and where $\map{p}{\Cr}{\Br}$ is a functor which is the identity on $0$-cells. A morphism in $\cat_{\Br_0}/\Br$ from $(\Cr,p)$ to $(\Cr',p')$ is a functor $f:\Cr \fl \Cr'$ such that the following diagram commutes in $\cat_{\Br_0}$
\[
\xymatrix @C=4em @R=1em {
\Cr  
  \ar[rr] ^-{f}
  \ar[dr] _-{p}
&& 
\Cr' 
  \ar[dl] ^-{p'}
\\
& 
\Br 
&
}
\]
Note that $\catsigmab$ has arbitrary limits, and that its terminal object is the pair $(\Br, id_{\Br})$. Given an object $(\Cr, p)$ in $\catsigmab$ and a $1$-cell $\map{f}{x}{y}$ of $\Br$, the \emph{fibre of $f$ in $\Cr$}, denoted by $\Cr_f$, is the pre-image of $f$ in~$\Cr$ by $p$, that is 
\[
\Cr_{f} = \set{\map{c}{x}{y}\;\;\text{in}\;\; \Cr_1 \hskip.2cm | \hskip.2cm p(c)=f}.
\]

\subsubsection{Internal groups} 
Let $\A$ be a category with finite products and denote by $T$ its terminal object. Recall from~{\cite[III.6]{MacLane98} that an \emph{(internal) group} in $\A$} is a tuple $\mathcal{G}=(G, \mu, \eta, (-)^{-1})$, where $G$ is an object of $\A$, and
\[
\map{\mu}{ G \times G}{G},
\qquad
\map{\eta}{ T}{G},
\quad\text{and}\quad
\map{(-)^{-1}}{ G}{G}
\]
are morphisms of $\A$, respectively called the \emph{multiplication}, \emph{identity}, and \emph{inverse} maps, which must satisfy the group axioms.
A \emph{morphism of internal groups} from $\mathcal{G}$ to $\mathcal{G}'$ is a morphism $f : G \fl G'$ of $\A$ that commutes with the associated multiplication and identity morphisms. The category of internal groups in~$\Ar$ and their morphisms is denoted by $\gpos{\A}$. 
The groups which additionally satisfy the commutativity condition $\mu = \mu \circ \tau$, where $\tau$ exchanges the factors of the product, constitute a full subcategory of $\gpos{\A}$ called the \emph{category of abelian groups} in $\A$, denoted by $\abo{\A}$.

\subsubsection{}
We now turn to the case of groups in the category $\catsigmab$. 
Since $(\Br, id_{\Br})$ is its terminal object, given a group $\gpo{\Cr,p}$ of $\catsigmab$, the following diagram is commutative in $\cat_{\Br_0}$: 
\[
\xymatrix@C=4em @R=1.2em{
\Br 
	\ar[rd] _-{id_{\Br}}
	\ar[rr] ^-{\eta}
&   
& 
\Cr 
	\ar[dl] ^-{p} 
\\
&  
\Br 
&
\\
}    
\]
This implies that every fibre $\Cr_{f}$ is non-empty, and that $\eta$ splits $p$ in $\cat_{\Br_0}$. Therefore each hom-set $\Cr(x,y)$ is a coproduct of the fibres:
\[
\Cr(x,y) = \coprod_{f\in \Br(x,y)} \Cr_{f},
\]
whose elements are denoted by $(c,f)$, with $c\in \Cr_f$.
The product in $\catsigmab$ of an object $(\Cr,p)$ with itself is given by pullback over $\Br$ in $\cat_{\Br_0}$, and is denoted by $(\Cr \times_{\Br} \Cr, \widetilde{p})$, where the category $\Cr \times_{\Br} \Cr$ has $\Br_0$ as its set of $0$-cells and for $x,y$ in $\Br_0$, 
\[
(\Cr \times_{\Br} \Cr)(x,y) = \set{(c,d)\in {\Cr(x,y)}^{2} \hskip.2cm | \hskip.2cm p(c) = p(d)}.
\]
The functor $\widetilde{p}$ is the identity on $0$-cells, and assigns to each pair $(c,d)$ of 1-cells in $\Cr \times_{\Br} \Cr$ their common image under $p$. The hom-sets of this product thus admit the following decomposition:
\[
(\Cr \times_{\Br} \Cr)(x,y) = \coprod_{f\in\Br(x,y)} \Cr_{f}\times \Cr_{f}.
\]
Furthermore, by definition of $\mu$, we have that the following diagram commutes in $\cat_{\Br_0}$:
\[
\xymatrix@C=4em @R=1.2em{
\Cr \times_{\Br} \Cr 
	\ar[rr] ^-{\mu}
	\ar[dr] _-{\widetilde{p}}
&   
& 
\Cr  
	\ar[dl] ^-{p}
\\
&  
\Br 
&
\\
}
\]
Thus, for all $c,d \in \Cr_{f}$, we have $ f = \widetilde{p}(c,d) = p(\mu(c,d))$, and therefore $\mu(c,d) \in \Cr_{f}$. As a consequence we obtain induced maps $\map{\mu_{f}}{\Cr_{f}\times \Cr_{f}}{\Cr_{f}}$ for each $1$-cell $f$ of $\Br$.
This endows each fibre with a group structure.

\subsubsection{The opposite group}
The \emph{opposite group} of an internal group $\gpo{\Cr,p}$ in $\catsigmab$ is the internal group $(\Cr^o, p^o)$ in $\cat_{\Br_0} / \Br^o$, for which the multiplication, identity and inverse maps, denoted respectively by $\mu^o$, $\eta^o$ and $(-)^{-1}_o$, are the induced opposite maps of $\mu$, $\eta$ and $(-)^{-1}$. Note that the fibre group $\Cr_f$ in $\Cr$ associated to a $1$-cell $f$ of $\Br$ is equal to the fibre group $\Cr^o_{f^o}$ associated to its opposite $f^0$.

\subsection{The category of actions}
\label{SS:CategoryActions}

\subsubsection{The category of actions}
Recall from \cite{marco2013homological} the definition of the category of actions of groups on pointed sets, denoted by $\act$. Objects of $\act$ are \emph{actions}, defined as pairs $(X,G)$ where $X$ is a pointed set, whose base point we shall denote by $0_X$, and $G$ is a group with identity element $1_G$, equipped with a right action of $G$ on $X$. The base point of $X$ is not assumed to be fixed by the action, and we will write
\[
G_0 = Fix_{G}(0_X) = \set{g\in G \hskip.2cm | \hskip.2cm 0_X \cdot g = 0_X}
\]
to denote the subset of $G$ fixing the base point.
A morphism in $\act$ is a pair $\map{f=(f',f'')}{(X,G)}{(Y,H)}$ where $\map{f'}{X}{Y}$ is a morphism of pointed sets, and $\map{f''}{G}{H}$ is a morphism of groups compatible with the action, in the sense that for all $g\in G$ and all $x\in X$,
\[
f'(x\cdot g ) = f'(x)\cdot f''(g).
\]

We consider $\act$ as a homological category as introduced by Grandis in {\cite[Section 6.4]{marco2013homological}}. With this structure the kernel of a morphism $\map{f}{(X,G)}{(Y,H)}$ is the inclusion
\[
(\Kernel{f'}, {f''}^{-1}(H_0))\fl (X,G).
\]
where $\Kernel{f'} = {f'}^{-1}(\set{0_Y})$. Observe that ${f''}^{-1}(H_0)$ is the subset of $G$ consisting of elements $g$ such that $ x = x'\cdot g$ for some $x,x'\in \Kernel{f'}$.
Duly the cokernel of a morphism $\map{f}{(X,G)}{(Y,H)}$ is the projection 
\[
(Y,H)\fl(Y/R, H),
\]
where $R$ is an equivalence relation on $Y$ defined by $y \equiv_{R} y' $ if and only if either $y$ or $y'$ is an element of~$f(X)$ and there exists some $h\in H$ with $y = y'\cdot h$.

\subsubsection{Embeddings of $\gp$ and $\psetcat$ in $\act$}
\label{SSS:EmbeddingInAct}
There are embeddings of the categories $\gp$ and $\psetcat$ into the category $\act$ that preserve exactness of sequences and morphisms. 
In the case of $\psetcat$, there are adjoint functors,
\[
J: \psetcat \fl \act,
\qquad
V: \act \fl \psetcat,
\]
defined by $J(X) = (X, \set{1})$ and $V(X,G) = X/G$ for all pointed sets $X$ and groups $G$ with a right action on~$X$, where $(X, \set{1})$ is the action of the trivial group on $X$, and $X/G$ is the quotient of $X$ by the $G$-orbits of the action, pointed at the class of $0_X$. 
The functor $J$ induces an equivalence of categories between $\psetcat$ and the full homological subcategory of $\act$ consisting of actions of the trivial group. This, along with the fact that $J$ preserves null morphisms, means that it preserves exactness of sequences.

On the other hand, the category $\gp$ can be realized as a retract of the category $\act$, via the functors 
\[
K: \gp \fl \act,
\qquad
R: \act \fl \gp,
\]
defined by $K(G) = (|G|, G)$ and $R(X,G)= G/\dual{G_0}$, where $(|G|, G)$ is the usual right action of $G$ on the underlying set $|G|$, pointed at $1_G$. Recall that this action is transitive. In the definition of $R$, $G/\dual{G_0}$ is the quotient of $G$ by the invariant closure in $G$ of the subgroup $G_0$ stabilizing the base point $0_X$ of $X$. 
These show that $\gp$ is a retract of $\act$ in the sense that $R\circ K = id_{\gp}$ since the action of $G$ on itself is transitive. As a consequence, a sequence of groups viewed in $\act$ is exact if and only if the sequence is exact in the usual sense.

\subsection{Natural systems with composition pairing}
\label{SS:NaturalSystemsWithCompositionPairing}

\subsubsection{Natural systems}
\label{SSS:NaturalSystem}
The \emph{category of factorizations} of a category $\Cr$, denoted by $\Fact{\Cr}$, is the category whose $0$-cells are the $1$-cells of $\Cr$, and a $1$-cell from $f$ to $f'$ is a pair $(u,v)$ of $1$-cells of $\Cr$ such that $ufv =f'$ holds in $\Cr$. Composition is given by 
\[
(u,v)(u',v') = (u'u,vv'),
\]
whenever $u'$ and $v$ are composable with $u$ and $v'$ respectively, and the identity on $\map{f}{x}{y}$ is the pair~$(1_x , 1_y)$. 
A \emph{natural system on a category $\Cr$ with values in a category $\V$} is a functor
\[
\map{D}{\Fact{\Cr}}{\V}.
\]
We will denote by $D_{f}$ (resp.~$D(u,v)$) the image of a $0$-cell $f$ (resp. $1$-cell $(u,v)$) of $\Fact\Cr$.
In most cases, we will consider natural systems with values in the category $\psetcat$ of pointed sets, the category $\gp$ of groups, the subcategory $\ab$ of abelian groups, or the category $\act$, then called \emph{natural systems of pointed sets}, \emph{of groups}, \emph{of abelian groups}, or \emph{of actions} respectively.

We denote by $\natsys{\Cr}{\V}$ the category whose objects are natural systems on $\Cr$ with values in $\V$ and in which morphisms are natural transformations between functors.
The category of natural systems with values in $\V$, denoted by $\opnat{\V}$, is defined as follows:
\begin{enumerate}[{\bf i)}]
\item its objects are the pairs $(\Cr,D)$ where $\Cr$ is a category and $D$ is a natural system on $\Cr$ with values in $\V$,
\item its morphisms are pairs
\[
(\Phi,\tau): (\Cr,D)\rightarrow (\Cr',D')
\]
\noindent consisting of a functor $\Phi : \Cr \rightarrow \Cr'$ and a natural transformation $\tau: D \rightarrow \Phi^*D'$, where the natural system $\Phi^*D': \Fact{\Cr} \rightarrow \V$ is defined by
\[
(\Phi^*D')(f)=D'(\Phi f),
\]
\noindent for any $1$-cell $f$  in $\Cr$ and $\Phi^*D'(u,v)=D'(\Phi(u),\Phi(v))$ for any $1$-cells $u$ and $v$ in $\Cr$, 
\item composition of morphisms $(\Psi,\sigma)$ with $(\Phi,\tau)$ is defined by
\[
(\Psi,\sigma) \circ (\Phi,\tau)=(\Psi \circ \Phi,(\Phi^*\sigma) \circ \tau).
\]
\end{enumerate}

\subsubsection{Natural systems and composition pairings}
\label{SSS:NaturalSystemsCompositionPairing}
Let $\V$ be a category with finite products. Given a natural system $D$ on a category $\Cr$ with values in $\V$, recall from~\cite{Porter2} that a \emph{composition pairing} associated to $D$ consists of two families of morphisms of~$\V$ 
\[
\big(\; \nu_{f,g} : D_f\times D_g \fl D_{fg}\;\big)_{f,g\in \Cr_1} \qquad \text{ and } \qquad \big(\; \nu_x : T \fl D_{1_x}\;\big)_{x\in \Cr_0},
\]
where $T$ is the terminal object in $\V$, and such that the three following coherent conditions are satisfied:
\begin{enumerate}[{\bf i)}]
\item \emph{naturality condition}:  the following diagram
\[
\xymatrix @C=4em @R=2.5em{
D_{f}\times\ D_{g}
  \ar[r] ^-{\nu_{f,g}}
  \ar[d] _-{D(u,1)\times D(1,v)}
& 
D_{fg} 
  \ar[d] ^-{D(u,v)}
\\
D_{uf}\times D_{gv} 
  \ar[r] _-{\nu_{uf,gv}}
&  
D_{ufgv}
}
\]
commutes in $\V$ for all $1$-cells $f,g,u,v$ in $\Cr_1$ such that the composites are defined.
\item \emph{The cocycle condition:} the diagram 
\[
\xymatrix@C=6em@R=2.5em{
D_{f}\times D_{g}\times D_{h}
	\ar[r] ^-{\nu_{f,g}\times id_{D_{h}}}
	\ar[d] _-{id_{D_{f}}\times \nu_{g,h}}
&  
D_{fg}\times D_{h} 
	\ar[d] ^-{\nu_{fg,h}}
\\
D_{f}\times D_{gh} 
	\ar[r] _-{\nu_{f,gh}}
& 
D_{fgh}
}
\]
commutes for all $1$-cells $f,g$ and $h$ of $\Cr$ such that the composite $fgh$ is defined,
\item \emph{The unit conditions:} the diagrams
\[
\xymatrix@C=4em@R=2.5em{
D_{f}  
& 
D_{f} \times D_{1_{y}} 
	\ar[l] _-{\nu_{f,1_{y}}}
\\
&   
D_{f}\times T
	\ar[u] _-{1_{D_{f}}\times\nu_{y}}
	\ar[ul] ^-{\cong}
}
\hskip1cm
\xymatrix@C=4em@R=2.5em{
D_{1_{x}}\times D_{f} 
	\ar[r] ^-{\nu_{1_{x},f}} 
& 
D_{f}
\\
T\times D_{f}   
	\ar[u] ^-{\nu_{x}\times 1_{D_{f}}}
	\ar[ur] _-{\cong}
& 
}    
\]
commute for every $1$-cell $\map{f}{x}{y}$ of $\Cr$.
\end{enumerate}

The category of natural systems on $\Cr$ with values in $\V$ which admit a composition pairing is the category whose objects are pairs $(D,\nu)$, with $D$ a natural system on $\Cr$ and $\nu$ a composition pairing associated to $D$. The morphims are natural transformations $\alpha : D \fl D'$ \emph{compatible with the composition pairings} $\nu$ and $\nu'$, in the sense that the following diagram commutes in $\V$
\[
\xymatrix@C=5em@R=2.5em{
D_{f}\times\ D_{g} 
	\ar[r] ^-{\nu_{f,g}}
	\ar[d] _-{\alpha_{f}\times\alpha_{g}}
& 
D_{fg}
	\ar[d] ^-{\alpha_{fg}}
\\
D'_{f}\times D'_{g} 
	\ar[r] _-{\nu'_{f,g}}
&  
D'_{fg} 
}
\]
for all composable $1$-cells $f$ and $g$ in $\Cr$.
We will denote this category of natural systems admitting a composition pairing by $\natsysnu{\Cr}{\V}$. We denote by $\opnatnu{\V}$ the subcategory of $\opnat{\V}$ consisting of natural systems with values in $\V$ which admit a composition pairing, in which we take only those morphisms $(\Phi,\tau):(\Cr,(D,\nu)) \fl (\Cr', (D',\nu'))$ such that $\tau$ is compatible with the composition pairings~$\nu$ and $\Phi^*\nu'$.

\subsubsection{Commutator condition}\label{subsubsec:compcomm}
Consider a natural system of groups $D : \Fact\Br \fl \gp$.
For all composable $1$-cells $f$ and $g$ of $\Br$, define a homomorphism $\map{\nu_{f,g}}{D_{f}\times D_{g}}{ D_{fg}}$, by setting
\[
\nu_{f,g}(d,d') =  D(f,1)(d').D(1,g)(d),
\]
for all $d\in D_f$ and $d'\in D_g$, where the right hand side is a product in $D_{fg}$. Porter proved in \cite{Porter2} that a natural system of groups $D$ on a category $\Br$ admits a composition pairing if, and only if, the condition 
\[
[D(f,1)(d'),D(1,g)(d)] = 1,
\]
holds for all $d\in D_{f}$, and $d'\in D_g$. In this case, the composition pairing is uniquely given by 
\[
\nu_{f,g}(d,d') =  D(f,1)(d').D(1,g)(d) =  D(1,g)(d).D(f,1)(d'),
\]
for all $1$-cells $f,g$ and $d\in D_f$ and $d'\in D_g$ such that the composition $fg$ is defined.
Note that as a consequence of this characterization, every natural system of abelian groups admits a composition pairing,~\cite{Porter2}. 

\subsubsection{Remarks}
\label{RemarkCompatility}
The compatibility condition for natural transformations is always satisfied in the case of natural systems of groups with composition pairings.
Indeed, if $\alpha: D\fl D'$ is a transformation of natural systems, we have 
\[
D'(1,g)(\alpha_{f}(d)) = \alpha_{fg}(D(1,g)(d)) \text{ and } D'(f,1)(\alpha_{g}(d')) = \alpha_{fg}(D(f,1)(d'))
\] 
for all $d\in D_f$ and $d'\in D_g$. Thus
\[
\alpha_{fg}(\nu_{f,g}(d,d')) =
\nu'_{f,g}(\alpha_{f}(d), \alpha_{g}(d')).
\]
We thereby deduce that~$\natsysnu{\Br}{\gp}$ (resp.~$\opnatnu{\gp}$) is a full subcategory of~$\natsys{\Br}{\gp}$ (resp.~$\opnat{\gp}$), and that the categories $\natsys{\Br}{\ab}$ and $\natsysnu{\Br}{\ab}$ are equal.

\subsubsection{Natural systems and internal groups}
\label{S:LaxSystemToInternalGroups}
Given a natural system $D : \Fact\Br \fl \gp$ with composition pairing $\nu$, we construct an internal group in the category $\mathbf{Cat}_{\Br_0}/ \Br$.
First, we construct a category $\Cr$ of~$\mathbf{Cat}_{\Br_0}$, whose hom-sets are defined as
\[
\Cr(x,y) := \coprod_{f\in\Br(x,y)} D_{f},
\]
for all $x$ and $y$ in $\Br_0$.
The $1$-cells of $\Cr$ are denoted by pairs $(c,f)$ where $c\in D_{f}$. 
For all $0$-cells $x,y,z$ of $\Br$, the $0$-composition maps 
\[
\star_0^{x,y,z} : \Cr(x,y) \times \Cr(y,z) \fl \Cr(x,z)
\]
are defined fibre by fibre using the decomposition
\[
\Cr(x,y)\times \Cr(y,z) \quad = \coprod_{f\in B(x,y), g\in B(y,z)} D_{f}\times D_{g},
\]
and the homomorphisms $\map{\nu_{f,g}}{D_{f}\times D_{g}}{D_{fg}}$, by setting
\[
(c,f)\star_0(d,g) := (\nu_{f,g}(c,d) , fg),
\]
for all $c$ in $D_f$ and $d$ in $D_g$. 
The associativity of composition $\star_0$ is a consequence of the cocycle condition, and the identity on a $0$-cell $x$ is the pair $(1_{D_{1_x^\Br}}, 1^{\Br}_x)$, where $1^{\Br}_x$ denotes the identity on $x$ in $\Br$. 

Let $\map{p}{\Cr}{\Br}$ denote the functor which is the identity on $0$-cells, and which assigns the pair $(c,f)$ to $f$. Then the pair $(\Cr,p)$ is an object of $\mathbf{Cat}_{\Br_0}/ \Br$. Now let us see that it is an internal group. The unit functor $\eta : (\Br, id_{\Br}) \fl(\Cr, p)$ is induced by the functor $\map{\eta}{\Br}{\Cr}$ defined by $\eta(f) := (1_{D_{f}} , f)$, for every $1$-cell~$f$ in $\Br$.
The multiplication map $\mu : (\Cr,p) \times (\Cr,p) \fl (\Cr,p)$ is defined by $\mu((c,f),(d,f)) := (c.d,f)$, for all $c,d$ in $D_f$ and where $c.d$ denotes the product of $c$ and $d$ in $D_f$.
The functoriality of $\mu$ is a consequence of $\nu_{f,g}$ being a homomorphism of groups.
Finally, the inverse map $(-)^{-1} : (\Cr,p) \fl (\Cr,p)$ is induced by the functor $(-)^{-1} : \Cr \fl \Cr$ defined by $(c,f)^{-1} = (c^{-1},f)$, for all $c$ in $D_f$.
This construction induces a functor 
\[
\natsysnu{\Br}{\gp} \fl \gpos{\mathbf{Cat}_{\Br_0}/\Br},
\]
which assigns an internal group in $\mathbf{Cat}_{\Br_0}/ \Br$ to each natural system of groups on $\Br$. Porter proves in~\cite{Porter2} that this functor induces an equivalence of categories
\[
\natsysnu{\Br}{\gp} \simeq \gpos{\mathbf{Cat}_{\Br_0}/\Br}.
\]

\subsubsection{Natural systems and split objects}
\label{S:SplitObject}
Given a category $\Br$, we define \emph{the category of split objects in $\catsigmab$}, denoted by $Split(\catsigmab)$, as the full subcategory of $\catsigmab$ whose objects are pairs $((\Cr,p),\epsilon)$, where $(\Cr, p)$ is an object of $\catsigmab$ and $\epsilon$ is a morphism of $\cat_{\Br_0}$ such that the following diagram commutes in $\cat_{\Br_0}$
\[
\xymatrix@C=4em @R=1.2em{
\Br 
	\ar[rd] _-{id_{\Br}}
	\ar[rr] ^-{\epsilon}
&   
& 
\Cr 
	\ar[dl] ^-{p} 
\\
&  
\Br 
&
}    
\]

Note that internal groups are split objects.
The equivalence of categories stated in \ref{S:LaxSystemToInternalGroups} from~\cite{Porter2} can be adapted to show that there is an equivalence of categories
\[
\natsysnu{\Br}{\psetcat} \simeq Split(\catsigmab).
\]

\subsection{Natural systems with composition pairings and Lax functors}\label{SS:NatLax}

In this subsection we recall from \cite{Porter2} how the notion of composition pairing is related to the structure of lax functors, and how a natural system with composition pairing can be interpreted as such.

\subsubsection{Lax functors}
We recall that given two 2-categories $\M$ and $\N$, a \emph{lax functor} from $\M$ to $\N$ is a data consisting of
\begin{enumerate}[{\bf i)}]
\item A map $\map{F}{\M_0}{\N_0}$,
\item A functor $\map{F_{x,y}}{\M(x,y)}{\N(F(x), F(y))}$ of hom-categories for all $0$-cells $x,y$ in $\M$,
\item A 2-cell $c_{f,g} : F_{x,y}(f)F_{y,z}(g) \dfl F_{x,z}(fg)$ of $\N$, for each pair of composable $1$-cells $f$ and $g$ of $\M$, where the juxtaposition on the left (resp. right) side denotes the composition $\star_0^{\N}$ (resp. $\star_0^{\M}$),
\item A 2-cell $c_{x} : 1_{F(x)} \dfl F(1_{x})$ of $\N$ for each $0$-cell $x$ of $\M$.
\end{enumerate}

These assignments must satisfy the following three conditions: 

\begin{enumerate}[{-}]
\item \emph{The naturality condition:} the assignment $(f,g)\mapsto c_{f,g}$ is natural in $(f,g)$, in the sense that $c$ is a natural transformation between functors induced by the $F_{x,y}$, namely those corresponding to the clockwise and anticlockwise composites in the following diagram:
\[
\xymatrix@C=6em@R=2.5em{
\M(x,y)\times\M(y,z)
	\ar[r] ^-{F_{x,y}\times F_{y,z}}
	\ar[d] _-{\star^{\M}_{0}}
& 
\N(F(x),F(y)) \times\N(F(y),F(z)) 
	\ar[d] ^-{\star^{\N}_{0}}
\\
\M(x,z) 
	\ar[r] _-{F_{x,z}}
&  
\N(F(x),F(z)) 
}
\]
\item \emph{The cocycle condition:} for $1$-cells $f,g$ and $h$ such that the composite $fgh$ is defined, the following diagram commutes in $\N$
\[
\xymatrix@C=5em@R=2.5em{
F(f)F(g)F(h) 
	\ar@{=>}[r] ^-{c_{f,g}F(h)}
	\ar@{=>}[d] _-{F(f) c_{g,h}}
&  
F(fg)F(h) 
	\ar@{=>}[d] ^-{c_{fg,h}}
\\
F(f)F(gh) 
	\ar@{=>}[r] _-{c_{f,gh}}
& 
F(fgh)
}
\]
\item \emph{The left and right unit conditions:} for every $1$-cell $\map{f}{x}{y}$ of $\M$ the following diagrams commute in~$\N$:
\[
\xymatrix@C=4em@R=2.5em{
F(1_{x})F(f)  
	\ar@{=>} [r] ^-{c_{1_{x},f}}
& 
F(f) = F(1_{x} f ) 
\\
1_{F(x)} F(f) =F(f) 
	\ar@{=>} [u] ^-{c_{x}F(f)}   
	\ar@{=} [ur]
& 
}      
\qquad
\xymatrix@C=4em@R=2.5em{
F(f 1_{y}) = F(f) 
& 
F(f)  F(1_{y}) 
	\ar@{=>} [l] _-{c_{f,1_{y}}}
\\
&   
F(f) =  F(f) 1_{F(y)} 
	\ar@{=>} [u] _-{F(f)c_{y}}
	\ar@{=} [ul] 
}
\]
\end{enumerate}

\begin{remark}\label{rem:laxnaturality}
The naturality of $c$ in $(f,g)$ requires that if $\alpha : f \dfl f'$ and $\beta : g \dfl g'$ are 2-cells of~$\M$, there is a commutative diagram in~$\N$:
\[
\xymatrix@C=5em@R=2.5em{
F_{x,y}(f)F_{y,z}(g) 
	\ar@{=>} [r] ^-{c_{f,g}}
	\ar@{=>} [d] _-{F_{x,y}(\alpha)F_{y,z}(\beta)}
&  
F_{x,z}(fg) 
	\ar@{=>} [d] ^-{F_{x,z}(\alpha\beta)}
\\
F_{x',y'}(f')F_{y',z'}(g) 
	\ar@{=>} [r] _-{c_{f',g'}}
& 
F_{x',z'}(f'g')
}
\]
The transformation $c$ thus makes $F$ homotopy-equivalent to a 2-functor in the sense that it provides $F$ with a weakened functorial behaviour with respect to 0-composition of 2-cells.
\end{remark}

\subsubsection{Lax systems}
Natural systems with composition pairings on a $1$-category $\Br$ with values in a cartesian category $(\V,\times, T)$, where $T$ is the terminal object, can be interpreted as certain lax functors. For that we must view both~$\Br$ and $\V$ as $2$-categories as follows. The category $\Br$ is extended into a $2$-category, denoted by $\twoB$, by adding identity $2$-cells, and the cartesian category $\V$ is suspended into a $2$-category, denoted by~$\V[1]$. The $2$-category~$\V[1]$ has only one $0$-cell, and its $1$-cells and their $0$-composites correspond to $0$-cells in~$\V$ and their products, while its $2$-cells correspond to the $1$-cells of $\V$.

We recall from \cite{Porter2} that a \emph{lax system} on a category $\Br$  with values in a cartesian category $\V$ is a lax functor from $\twoB$ with values in $\V[1]$. It is shown in \cite{Porter2} that given a lax system $(F,c)$ on $\Br$ with values in~$\V$, we can construct a natural system $UD$ by associating to each $0$-cell $f$ of $\Fact{\Br}$ the $0$-cell $D_f$ of $\V$, and to each $1$-cell $(u,v)$ of $\Fact{\Br}$ the $1$-cell 
$D_f \rightarrow D_{ufv}$ sending $d$ to $c_{uf,v}(c_{u,f}(1,d),1)$.
We define the \emph{category of lax systems on $\Br$ with values in $\V$}, denoted by $\laxsys{\Br}{\V}$, in which a morphism from $(F,c)$ to $(F', c')$ is a natural transformation $\alpha:UD\dfl UD'$ between the corresponding underlying natural systems, such that the following diagram commutes:
\[
\xymatrix@C=3em@R=2.25em{
UD_{f}\times UD_{g} 
	\ar[r] ^-{c_{f,g}}
	\ar[d] _-{\alpha_{f}\times\alpha_{g}}
& 
UD_{f} 
	\ar [d] ^-{\alpha_{fg}}
\\
UD'_{f}\times UD'_{g} 
	\ar[r] _-{c'_{f,g}}
&  
UD'_{fg} 
}
\]

\subsubsection{Lax systems of groups}
Let us expand on the notion of lax system of groups $(D,\nu)$ on a category $\Br$. The set of $0$-cells of $\Br$ is collapsed on the single $0$-cell $\ast$ of $\gp[1]$. Each $1$-cell $\map{f}{x}{y}$ of $\Br$ gives a group $D_{f}$, and composable $1$-cells $f$ and $g$ give a group homomorphism, $\map{\nu_{f,g}}{D_{f}\times D_{g}}{D_{fg}}$. Finally, since the terminal object in $\gp$ is the trivial group, we have no choice in defining the homomorphisms $\map{\nu_{x}}{1_{\ast}=T}{D_{1_x}}$ associated to objects. 
Porter established in \cite{Porter2} the following equivalence of categories
\[
\laxsys{\Br}{\gp} \simeq \natsysnu{\Br}{\gp}.
\]

\section{Directed homotopy as an internal group}
\label{S:DirectedHomotopyInternalGroup}

In Section~\ref{subsec:dihom} we recall the notion of dispace from~\cite{grandisbook} and the definition of natural homotopy and natural homology as introduced in~\cite{Eilenberg,dubutthesis}. These are natural systems extending the classical algebraic invariants to dispaces.
In Section~\ref{SS:DirectedHomotopyAsInternalGroup}, we show that these natural systems have an associated composition pairing, and relate them to certain internal groups or split objects. Finally, in Section~\ref{SS:FundamentalCategory} we recall the notion of fundamental category from~\cite{grandisbook} and relate it to natural homotopy.

\subsection[Directed homology and homotopy]{Directed homology and homotopy}\label{subsec:dihom}
In this subsection we recall the notion of dispaces from~\cite{grandisbook}, and define algebraic invariants for these spaces, natural homotopy and natural homology, as introduced in~\cite{Eilenberg,dubutthesis}.

\subsubsection{Directed spaces}
\label{rem:dirspaces}
Recall from~\cite{grandisbook} that a \emph{directed space}, or \emph{dispace}, is a pair $\Xr=(X, dX)$, where $X$ is a topological space and $dX$ is a set of paths in $X$, \ie continuous maps from~$[0,1]$ to~$X$, called \emph{directed paths}, or \emph{dipaths} for short, satisfying the three following conditions:\begin{enumerate}[{\bf i)}] 
\item Every constant path is directed,
\item $dX$ is closed under monotonic reparametrization,
\item $dX$ is closed under concatenation.
\end{enumerate}
We will denote by $f\star g$ the concatenation of dipaths $f$ and $g$, defined via monotonic reparametrization. A morphism $\map{\varphi}{(X,dX)}{(Y,dY)}$ of dispaces is a continuous function $\varphi : X \fl Y$ that preserves directed paths, \ie, for every path $\map{p}{[0,1]}{X}$ in $dX$, the path $\map{\varphi_{*}p}{[0,1]}{Y}$ belongs to $dY$. The category of dispaces is denoted $\dtop$.
An isomorphism in $\dtop$ from $(X,dX)$ to $(Y,dY)$ is a homeomorphism from $X$ to $Y$ that induces a bijection between the sets $dX$ and $dY$. 

Note that the forgetful functor $U : \dtop \fl \top$ admits left and right adjoint functors. The left adjoint functor sends a topological space $X$ to the dispace $(X,X_d)$, where $X_d$ is the set of constant directed paths. The right adjoint sends $X$ to the dispace $(X,X^{[0,1]})$, where $X^{[0,1]}$ is the set of all paths in $X$.

For a dispace $\Xr=(X, dX)$ and $x$,$y$ in $X$, we denote by $\overrightarrow{Di}(\Xr)(x,y)$ the space of dipaths $f$ in $X$ with source $x=f(0)$ and target $y=f(1)$, equipped with the compact-open topology.

\subsubsection{The trace category}
The \emph{trace space} of a dispace $\Xr$ from $x$ to $y$, denoted by~$\trace{\Xr}{x}{y}$, is the quotient of $\overrightarrow{Di}(\Xr)(x,y)$ by monotonic reparametrization, equipped with the quotient topology.
The \emph{trace} of a dipath $f$ in $\Xr$, denoted by $\tr{f}$ or $f$ if no confusion is possible, is the equivalence class of $f$ modulo monotonic reparametrization. The concatenation of dipaths of $\Xr$ is compatible with this quotient, inducing a concatenation of traces defined by $\tr{f}\star\tr{g} := \tr{f\star g}$, for all dipaths $f$ and $g$ of $\Xr$.
We denote by $\overrightarrow{\mathbf{P}} : \dtop \fl \cat$ the functor which associates to a dispace $\Xr$ the \emph{trace category of $\Xr$}, whose $0$-cells are points of $X$, $1$-cells are traces of $\Xr$, and composition is given by concatenation of traces.

\subsubsection{Dihomotopies}\label{SSS:Dihomotopies}
The \emph{directed unit interval}, denoted by $\uparrow\!\! I$, is the dispace with underlying topological space $[0,1]$ and in which dipaths are non-decreasing maps from $[0,1]$ to $[0,1]$.
The \emph{directed cylinder}  of a dispace $\Xr$, denoted by $\dcyl{\Xr}$, is the dispace 
$\left(X\times\! [0,1] , d(\dcyl{\Xr})\right)$,
where
\[
d(\dcyl{\Xr}) = \set{c = (c_1,c_2) : [0,1] \fl X\times\! [0,1] \; | \; c_1\in dX \;\text{and $c_2$ monotonic}}.
\]
Recall from~\cite{grandisbook} that an \emph{elementary dihomotopy between morphisms} $\varphi,\psi : \Xr \fl \Yr$ is a morphism of dispaces
\[
h \ : \ \dcyl{\Xr} \fl \Yr
\]
such that $h(x,0) = \varphi (x)$ and $h(x,1) = \psi (x)$ for all $x$ in $X$. Dihomotopy between morphisms is defined as the symmetric and transitive closure of elementary dihomotopies.  
In particular, given a dispace $\Xr$ and dipaths $f$ and $g$ of $\Xr$, an \emph{elementary dihomotopy of dipaths} is an elementary dihomotopy between the morphisms $f,g : \uparrow\!\! I \fl \Xr$. Two dipaths are thus dihomotopic if there exists a zig-zag of elementary dihomotopies connecting them.

\subsubsection{Trace diagrams}
\label{SSS:TraceDiagrams}
The \emph{pointed trace diagram in $\top_*$} of a dispace $\Xr$ is the functor 
\[
\map{\overrightarrow{T}_{*}(\Xr)}{{\Fact\diP{\Xr}}}{\top_*}
\]
sending a trace $\map{\tr{f}}{x}{y}$ to the pointed topological space $(\trace{\Xr}{x}{y},\tr{f})$, and a $1$-cell $(\tr{u},\tr{v})$ of $\Fact{\diP{\Xr}}$ to the continuous map 
\[
\tr{u} \star \_ \star \tr{v} : \trace{\Xr}{x}{y} \fl \trace{\Xr}{x'}{y'}
\]
which sends a trace $\tr{f}$ to $\tr{u} \star \tr{f} \star \tr{v}$.
The functor $\overrightarrow{T}_{*}(\Xr)$ extends to a functor
\[
\overrightarrow{T}_{*} : \dtop \fl \opnat{\top_*}
\]
sending a dispace $\Xr$ to the pair $(\diP{\Xr},\overrightarrow{T}_{*}(\Xr))$.
Observe that a morphism of dispaces $\varphi : \Xr \fl \Yr$ induces continuous maps 
\[
\varphi_{x,y} : \trace{\Xr}{x}{y} \fl \trace{\Yr}{\varphi(x)}{\varphi(y)}
\]
for all points $x,y$ of $X$. Thus we obtain natural transformations between the corresponding trace diagrams:
\[
\overrightarrow{\varphi}_*: \overrightarrow{T}_{*}(\Xr) \dfl \overrightarrow{T}_{*}(\Yr).
\] 

\subsubsection{Natural homotopy and natural homology}
\label{SSS:NaturalHomotopyNaturalHomology}

Recall from \cite{Eilenberg,dubutthesis} that the \emph{$1^{st}$ natural homotopy functor of $\Xr$} is the natural system denoted by $\map{\sysp{1}{\Xr}}{{\Fact\diP{\Xr}}}{\setcat}$, and defined as the composite 
\[
{\Fact\diP{\Xr}} 
\overset{\overrightarrow{T}_{*}(\Xr)}{\longrightarrow} 
\top_* 
\overset{\pi_0}{\longrightarrow}
\psetcat,
\] 
where $\pi_0$ is the $0^{th}$ homotopy functor with values in $\psetcat$.
That is, for a trace $\tr{f}$ on $\Xr$ from $x$ to $y$,
\[
\sysp{1}{\Xr}_{\tr{f}} = ( \pi_0(\trace{\Xr}{x}{y}) , [\tr{f}] ),
\]
where $[\tr{f}]$ denotes the path-connected component of $\tr{f}$ in $\trace{\Xr}{x}{y}$.
For $n\geq 2$, the \emph{$n^{th}$ natural homotopy functor of $\Xr$}, denoted by $\map{\sysp{n}{\Xr}}{{\Fact\diP{\Xr}}}{\gp}$, is defined as the composite 
\[
{\Fact\diP{\Xr}} 
\overset{\overrightarrow{T}_{*}(\Xr)}{\longrightarrow} 
\top_* 
\overset{\pi_{n-1}}{\longrightarrow}
\gp,
\] 
where $\pi_{n-1}$ is the $(n-1)^{th}$ homotopy functor. 
Note that for $n\geq 3$, the functor $\sysp{n}{\Xr}$ has values in $\ab$.
Finally, for $n=0$, we define $\map{\sysp{0}{\Xr}}{{\Fact\diP{\Xr}}}{\psetcat}$ as the functor sending a trace $\tr{f}$ to the pointed singleton $(\set{\tr{f}},\tr{f})$.

Using the inclusion functors $J : \psetcat \fl \act$ and $K : \gp \fl \act$ defined in~\ref{SSS:EmbeddingInAct}, the classical homotopy functors  can be realized as functors $\pi_n : \top_\ast \fl \act$, for all $n\geq 0$. With this interpretation, natural homotopy can be resumed by functors
\[
\sysp{n}{\Xr} : {\Fact\diP{\Xr}} \fl \act,
\]
for all $n\geq 0$.

Recall from~\cite{Eilenberg}, that for $n\geq 1$, the~\emph{$n^{th}$ natural homology functor of $\Xr$} is the functor denoted by \linebreak $\map{\sysh{n}{\Xr}}{{\Fact\diP{\Xr}}}{\ab}$, and defined as the composite 
\[
{\Fact\diP{\Xr}} 
\overset{\overrightarrow{T}(\Xr)}{\longrightarrow} 
\top
\overset{H_{n-1}}{\longrightarrow}
\ab
\] 
where $H_{n-1}$ is the $(n-1)^{th}$ singular homology functor. 

The functors $\sysp{n}{\Xr}$ and $\sysh{n}{\Xr}$, for $\Xr$ in $\dtop$, extend to functors 
\[
\overrightarrow{P}_{\!\!n} : \dtop \longrightarrow \opnat{\act},
\quad\text{and}\quad
\overrightarrow{H}_n : \dtop \longrightarrow \opnat{\ab},
\]
sending a dispace $\Xr$ to $(\diP{\Xr},\sysp{n}{\Xr})$ and $(\diP{\Xr},\sysh{n}{\Xr})$ respectively.

\begin{proposition}
\label{P:DirectedHomotopy<->Homotopy}
Given a topological space $X$, the dispace $\Xr = (X, X^{[0,1]})$ is such that for every $x$ in $X$,
\[
\sysp{n}{\Xr}_{c_x} \cong \pi_n(X,x),
\]
where $c_x$ denotes the trace of the constant dipath equal to $x$.
\end{proposition}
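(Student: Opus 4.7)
The plan is to reduce everything to the classical based loop space $\Omega(X,x)$. Since $dX = X^{[0,1]}$ includes every continuous path, the dipath space $\overrightarrow{Di}(\Xr)(x,x)$ with the compact-open topology is precisely the based loop space $\Omega(X,x)$, and the distinguished point $c_x$ corresponds to the constant loop at $x$. By definition, the trace space $\trace{\Xr}{x}{x}$ is the quotient of $\Omega(X,x)$ by the equivalence relation generated by monotonic reparametrizations.

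The crucial step is to show that the canonical projection
\[
q : \Omega(X,x) \longrightarrow \trace{\Xr}{x}{x}
\]
is a weak homotopy equivalence. The key point is that the monoid $M$ of monotonic surjections $[0,1] \to [0,1]$ is a convex, hence contractible, subspace of $C([0,1],[0,1])$, and each fibre of $q$ is an orbit under the induced $M$-action. I plan to invoke here the standard result of Raussen (and Fahrenberg--Raussen) stating that for any dispace the projection from the dipath space to the trace space is a homotopy equivalence. Note also that the orbit of the constant loop is a singleton, so the pointing $c_x \in \trace{\Xr}{x}{x}$ is compatible with the base point of $\Omega(X,x)$.

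Once $q$ is known to be a weak homotopy equivalence, the conclusion follows from the classical loop-space adjunction $\pi_{k}(\Omega(X,x)) \cong \pi_{k+1}(X,x)$. Explicitly, for $n \geq 2$ one has
\[
\sysp{n}{\Xr}_{c_x} \;=\; \pi_{n-1}(\trace{\Xr}{x}{x}, c_x) \;\cong\; \pi_{n-1}(\Omega(X,x), c_x) \;\cong\; \pi_n(X,x);
\]
for $n = 1$ the same argument gives $\sysp{1}{\Xr}_{c_x} = (\pi_0(\trace{\Xr}{x}{x}), [c_x]) \cong \pi_0(\Omega(X,x)) = \pi_1(X,x)$ as pointed sets; and for $n = 0$ the identification $\sysp{0}{\Xr}_{c_x} = (\{c_x\}, c_x) \cong \pi_0(X,x)$ holds under the convention that $\pi_0(X,x)$ denotes the pointed path-component of $x$. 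Naturality in $x$ is automatic since both sides are built from the same functorial data on pointed topological spaces.

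The main obstacle is the weak homotopy equivalence $\Omega(X,x) \simeq \trace{\Xr}{x}{x}$: this is a known but technically delicate fact in directed topology, and everything else in the proof is a direct application of definitions together with the standard loop-space isomorphism.
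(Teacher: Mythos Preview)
Your argument is essentially the paper's own proof: identify $\overrightarrow{Di}(\Xr)(x,x)$ with the loop space $\Omega(X,x)$, pass to the trace space via the reparametrization quotient, and apply the loop-space shift $\pi_{n-1}(\Omega(X,x))\cong\pi_n(X,x)$; you are simply more explicit about why the quotient map is a weak equivalence (invoking Raussen's result), whereas the paper just observes that reparametrization-equivalent paths are homotopic. One small caveat: drop the $n=0$ clause, since $\sysp{0}{\Xr}_{c_x}$ is always a one-point set while $\pi_0(X,x)$ need not be --- the statement (and the paper's proof) is really only about $n\geq 1$.
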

\begin{proof}
Recall that for any $x\in X$, the loop space $\Omega (X,x)$ is the set of all continuous paths $p:\mathbb{S}^1 \fl X$ given the compact-open topology, and is thus homeomorphic to $\overrightarrow{Di}(\Xr)(x,x)$. As a consequence of Eckmann-Hilton duality, for any topological space $X$ and any $n\geq 1$,
\[
\pi_n(X,x) \cong \pi_{n-1}(\Omega (X,x)).
\]
The quotient of $\Omega (X,x)$ by monotonic reparametrization is the space $\trace{\Xr}{x}{x}$, and since paths in the same reparametrization class are homotopic, we have $\sysp{n}{\Xr}_{c_x} \cong \pi_n(X,x)$.
\end{proof}

As a consequence, given a dispace $\Xr = (X, X^{[0,1]})$, if $X$ is $n$-connected, then for every $x\in X$, the space $\trace{\Xr}{x}{x}$ is also $(n-1)$-connected. Applying the Hurewicz theorem, Proposition~\ref{P:DirectedHomotopy<->Homotopy} yields the following result.
\begin{corollary}
For $n\geq 1$ an $(n-1)$-connected topological space $X$, the dispace $\Xr = (X, X^{[0,1]})$ is such that for every $x$ in $X$
\[
\sysh{i}{\Xr}_{c_x} \cong H_i(X),
\]
for all $i\leq n$.
\end{corollary}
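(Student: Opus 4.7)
The plan is to combine Proposition~\ref{P:DirectedHomotopy<->Homotopy} with two applications of the Hurewicz theorem, one on $X$ itself and one on the trace space $\trace{\Xr}{x}{x}$, the latter being available thanks to the connectivity transfer recalled just before the statement.

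First I would unfold the definition of natural homology to rewrite
\[
\sysh{i}{\Xr}_{c_x} \;=\; H_{i-1}(\trace{\Xr}{x}{x}).
\]
Since $X$ is $(n-1)$-connected, the observation preceding the corollary yields that $\trace{\Xr}{x}{x}$ is $(n-2)$-connected, so for $i\leq n$ the Hurewicz theorem provides a natural isomorphism
\[
H_{i-1}(\trace{\Xr}{x}{x}) \;\cong\; \pi_{i-1}(\trace{\Xr}{x}{x}).
\]

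Second I would re-use the identifications from the proof of Proposition~\ref{P:DirectedHomotopy<->Homotopy}: monotonic reparametrization preserves the homotopy class of a dipath, so $\trace{\Xr}{x}{x}$ is weakly equivalent to $\overrightarrow{Di}(\Xr)(x,x) \cong \Omega(X,x)$, and Eckmann-Hilton duality then delivers
\[
\pi_{i-1}(\trace{\Xr}{x}{x}) \;\cong\; \pi_{i-1}(\Omega(X,x)) \;\cong\; \pi_i(X,x).
\]
Third, a second application of the Hurewicz theorem, this time to the $(n-1)$-connected space $X$ itself, produces $\pi_i(X,x) \cong H_i(X)$ for every $i\leq n$. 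Composing these three natural isomorphisms yields the claim.

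The main bookkeeping obstacle is to align the degrees so that both Hurewicz applications are legitimate in the same range of $i$: Hurewicz on the trace space requires $i-1 \leq n-1$, while Hurewicz on $X$ requires $i \leq n$, and both happen to be satisfied precisely on the stated range. The only remaining delicate point is the boundary degree $i=1$, where Hurewicz only yields an isomorphism modulo abelianization of $\pi_1$; however, under the connectivity hypothesis on $X$ together with the matching connectivity of $\trace{\Xr}{x}{x}$, both $\pi_1$-groups that appear in the chain are already abelian (indeed trivial when $n\geq 2$), and the conventions on $H_0$ absorb the case $n=1$. Once these ranges are verified, the composite of the four natural isomorphisms delivers the statement.
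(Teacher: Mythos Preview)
Your proposal is correct and follows essentially the same route as the paper: the paper's one-line justification (``Applying the Hurewicz theorem, Proposition~\ref{P:DirectedHomotopy<->Homotopy} yields the following result'') is exactly the chain you spell out, namely Hurewicz on the trace space, the loop-space identification of Proposition~\ref{P:DirectedHomotopy<->Homotopy}, and Hurewicz on $X$. Your explicit bookkeeping of the degree ranges and the low-dimensional edge cases is more careful than the paper, but the argument is the same.
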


\subsection{Directed homotopy as an internal group or a split object}
\label{SS:DirectedHomotopyAsInternalGroup}

In this subsection we show that for any dispace $\Xr$, the natural systems $\sysp{n}{\Xr}$ and $\sysh{n}{\Xr}$ admit composition pairings.
We treat the case $\sysp{1}{\Xr}$ in Lemma~\ref{L:CompositionPairingOnPinX_n=1} separately from the the case $\sysp{n}{\Xr}$ for $n\geq 2$ in Lemma~\ref{L:CompositionPairingOnPinX_n>=1}. Finally, using the equivalence of categories stated in~\ref{S:LaxSystemToInternalGroups}, we describe the natural homotopy functor $\sysp{n}{\Xr}$ as split objects, or internal group when $n\geq 2$, in the category $\textbf{Cat}_{X}/ \diP{\Xr}$. We also treat the case of the natural homology functors $\sysh{n}{\Xr}$, for $n\geq 1$, which we describe as intrernal abelian groups in the category $\textbf{Cat}_{X}/ \diP{\Xr}$.

\begin{lemma}
\label{L:CompositionPairingOnPinX_n=1}
The natural system of pointed sets $\sysp{1}{\Xr}$ admits a composition pairing $\nu$ given,
for all composable traces $\map{f}{x}{y},\map{g}{y}{z}$ of $\Xr$, by
\[
\nu_{f,g}([f'],[g']) = [f'\star g']
\]
for any $[f']$ in $\pi_{0}(\trace{\Xr}{x}{y},f)$ and $[g']$ in $\pi_{0}(\trace{\Xr}{y}{z},g)$.
\end{lemma}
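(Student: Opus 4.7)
The plan is to first verify that the proposed formula defines a morphism in $\psetcat$ for each composable pair $(f,g)$, and then check the three axioms (naturality, cocycle, unit) of a composition pairing from~\ref{SSS:NaturalSystemsCompositionPairing}.

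The only non-formal point is well-definedness. The concatenation of dipaths descends, via the definition of the trace space, to a continuous map
\[
\star : \trace{\Xr}{x}{y}\times\trace{\Xr}{y}{z} \fl \trace{\Xr}{x}{z}.
\]
Thus given representatives $f'_0, f'_1$ in the same path-component of $\trace{\Xr}{x}{y}$ and $g'_0,g'_1$ in the same path-component of $\trace{\Xr}{y}{z}$, any pair of continuous paths $H_1,H_2$ in the respective trace spaces yields a continuous path $t\mapsto H_1(t)\star H_2(t)$ from $f'_0\star g'_0$ to $f'_1\star g'_1$ in $\trace{\Xr}{x}{z}$. This shows $\nu_{f,g}$ is well-defined as a map of sets. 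It preserves base points because the composition in $\diP{\Xr}$ is precisely concatenation of traces, so $\nu_{f,g}([f],[g])=[f\star g]=[fg]$, which is the base point of $\sysp{1}{\Xr}_{fg}$. The unit maps $\nu_x:T\fl\sysp{1}{\Xr}_{1_x}$ from the singleton pointed set are forced to send $\ast$ to the base point $[\tr{1_x}]$.

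For naturality, given $1$-cells $u,v$ of $\diP{\Xr}$ composable with $f,g$ respectively, the action of the natural system is $\sysp{1}{\Xr}(u,1)[f'] = [\tr{u}\star f']$ and $\sysp{1}{\Xr}(1,v)[g'] = [g'\star\tr{v}]$. Both routes around the naturality square in~\ref{SSS:NaturalSystemsCompositionPairing} send $([f'],[g'])$ to $[\tr{u}\star f'\star g'\star\tr{v}]$, which coincide by associativity of trace concatenation. The cocycle square commutes by the same associativity, since both composites send $([f'],[g'],[h'])$ to $[f'\star g'\star h']$. Finally, the unit triangles commute because $\tr{1_x}\star f' = f'\star\tr{1_y} = f'$ holds strictly in the trace category $\diP{\Xr}$, so $\nu_{f,1_y}([f'],[\tr{1_y}])=[f']$ and $\nu_{1_x,f}([\tr{1_x}],[f'])=[f']$, as required.

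I do not expect any genuine obstacle: the statement is essentially the formal assertion that concatenation of traces induces a strictly associative, unital pairing on path-components, compatible with pre- and post-composition. The only slightly delicate point is the well-definedness step, which rests on continuity of concatenation on trace spaces, and I would either cite this directly from the setup recalled in~\ref{subsec:dihom} or briefly justify it by combining dihomotopies coordinate-wise.
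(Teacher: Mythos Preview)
Your proposal is correct and follows essentially the same approach as the paper: both verify that $\nu_{f,g}$ is a well-defined morphism of pointed sets, note that the unit maps $\nu_x$ are forced, and then derive the cocycle, unit, and naturality conditions from associativity and unitality of trace concatenation. If anything, you are more explicit about the well-definedness step (via continuity of $\star$ on trace spaces), which the paper simply asserts.
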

\begin{proof}
Observe that the maps $\nu_{x}:\{\ast\}\fl \sysp{1}{\Xr}$ for $x$ in $X$ are uniquely determined since the singleton is the initial object in $\psetcat$.
For composable traces $f$ and $g$ of $\Xr$, the maps $\nu_{f,g}$ are well defined and are morphisms of $\psetcat$.
Thus, we only have to check the cocycle, unit, and naturality conditions.
The cocycle condition is a consequence of the fact that the composition is associative. The right unit condition is verified, since for $\map{f}{x}{y}$, the following diagram
\[
\xymatrix@C=4em@R=3em{
\sysp{1}{X}_f  
& 
\sysp{1}{X}_f \times  \sysp{1}{X}_{1_{y}}
	\ar[l] _-{\nu_{f,1_{y}}} 
\\
&
\sysp{1}{X}_f \times \set{*} 
	\ar[u] _-{id_{\sysp{1}{X}_f}\times\nu_{y}}
	\ar[ul] ^-{\cong}
}
\]
commutes. Indeed, if $c_{y}$ denotes the constant path equal to $y$, we have 
$[f] = [f\star c_{y}] = [f\circ \nu_y(\ast)]$, since~$[c_y]$ is the pointed element of $\sysp{1}{X}_{1_{y}}$. The left unit condition is similarly verified.
Finally, the naturality condition follows from the associativity of concatenation of traces. Indeed, the equality 
\[
[(u\star f)\star(g\star v)] = [u\star (f\star g) \star v]
\]
holds for any traces $u,v,f,g$ of $\Xr$ such that the composites are defined.
\end{proof}

\begin{lemma}
\label{L:CompositionPairingOnPinX_n>=1}
For every $n\geq 2$, the natural system of groups $\sysp{n}{\Xr}$ admits a composition pairing~$\nu$ defined by
\[
\nu_{f,g}(\sigma,\tau) =  \sigma \star \tau,
\]
for all composable traces $\map{f}{x}{y}$ and $\map{g}{y}{z}$ of $\Xr$ and homotopy classes $\sigma$ in $\pi_{n-1}(\trace{\Xr}{x}{y},f)$ and $\tau$ in $\pi_{n-1}(\trace{\Xr}{y}{z},g)$, where $\sigma \star \tau$ denotes the homotopy class in $\trace{\Xr}{x}{z}$ of the map $t\mapsto \sigma(t) \star \tau(t)$. 
\end{lemma}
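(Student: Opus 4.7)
The plan is to verify the four axioms from \ref{SSS:NaturalSystemsCompositionPairing}. The unit morphisms $\nu_x : T \to \sysp{n}{\Xr}_{1_x}$ are uniquely determined since $T$ is the trivial group, so only the definition of $\nu_{f,g}$ as a group homomorphism, together with the naturality, cocycle, and unit conditions, remain to be checked.

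The crucial step is to show that the formula $\nu_{f,g}(\sigma,\tau) = \sigma \star \tau$ defines a group homomorphism. For this I would observe that trace concatenation furnishes a continuous pointed map
\[
\star : \big(\trace{\Xr}{x}{y} \times \trace{\Xr}{y}{z},\, (f,g)\big) \longrightarrow \big(\trace{\Xr}{x}{z},\, f \star g\big),
\]
with continuity inherited from the continuity of dipath concatenation in the compact-open topology, passed through the quotient by monotonic reparametrization. Since $n - 1 \geq 1$, applying $\pi_{n-1}$ yields a group homomorphism, and composing with the canonical isomorphism $\pi_{n-1}(A \times B, (a,b)) \cong \pi_{n-1}(A,a) \times \pi_{n-1}(B,b)$ produces a group homomorphism whose action on representatives $(\sigma,\tau)$ is the class of $t \mapsto \sigma(t) \star \tau(t)$. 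This is precisely the prescribed $\nu_{f,g}$.

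The remaining axioms follow by unwinding each diagram on representatives. The cocycle condition reduces to $(\sigma(t) \star \tau(t)) \star \rho(t) = \sigma(t) \star (\tau(t) \star \rho(t))$, which is associativity of trace concatenation. The naturality diagram reduces to $(u \star \sigma(t)) \star (\tau(t) \star v) = u \star (\sigma(t) \star \tau(t)) \star v$, again by associativity. For the unit conditions, the identity of $\pi_{n-1}(\trace{\Xr}{y}{y}, 1_y)$ is represented by the constant map $t \mapsto c_y$, and $\sigma(t) \star c_y = \sigma(t)$ since $c_y$ is a strict unit at the level of traces; the other unit condition is symmetric.

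The main technical point I expect to encounter is the continuity of $\star$ when the domain is the product of trace spaces endowed with their quotient topologies. Once this technicality is settled, the group-homomorphism property of $\nu_{f,g}$ becomes a purely functorial consequence, and no Eckmann--Hilton argument or explicit verification of the commutator condition from \ref{subsubsec:compcomm} need be carried out directly---that condition then holds automatically as a by-product of Porter's characterization.
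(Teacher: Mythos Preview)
Your proof is correct but follows a different route from the paper's. The paper proceeds via Porter's commutator criterion from \ref{subsubsec:compcomm}: it first records the exchange law $(\sigma\star\tau)\cdot(\sigma'\star\tau') = (\sigma\cdot\sigma')\star(\tau\cdot\tau')$ at the level of homotopy classes, uses it to show that $D(1,g)(\sigma)$ and $D(f,1)(\tau)$ commute in $\sysp{n}{\Xr}_{fg}$, and then invokes Porter's characterization to obtain both the existence and the explicit form $\nu_{f,g}(\sigma,\tau)=\sigma\star\tau$ of the composition pairing in one stroke. You instead obtain the group-homomorphism property of $\nu_{f,g}$ from functoriality of $\pi_{n-1}$ applied to the continuous concatenation map together with the product formula $\pi_{n-1}(A\times B)\cong\pi_{n-1}(A)\times\pi_{n-1}(B)$, and then verify the naturality, cocycle, and unit axioms individually from associativity and unitality of trace concatenation. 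The paper's route is shorter because Porter's criterion packages all three coherence conditions at once; your route is more self-contained and avoids the commutator computation, at the cost of checking each axiom by hand. The continuity of $\star$ on products of quotient trace spaces, which you flag as the main technical point, is tacitly needed in the paper's argument as well (the exchange law presupposes that $t\mapsto\sigma(t)\star\tau(t)$ is continuous), so this is not an extra burden specific to your approach.
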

\begin{proof}
First observe that the maps $\nu_{x}$, for $x$ in $X$, are uniquely determined since the trivial group is the initial object in $\gp$.
Let us prove that $\sysp{n}{\Xr}$ verifies the commutator condition recalled in \ref{subsubsec:compcomm}.
Given composable $1$-cells $f$ and $g$ of $\diP{\Xr}$, the $1$-cell $(1,g)$ of $\Fact\diP{\Xr}$ induces a map 
\[
\sysp{n}{\Xr}(1,g): \pi_{n-1}(\trace{\Xr}{x}{y},f) \fl \pi_{n-1}(\trace{\Xr}{x}{z},f\star g)
\]
sending a class $\sigma$ in $\pi_{n-1}(\trace{\Xr}{x}{y},f)$ to the homotopy class of the map $t\mapsto \sigma(t) \star g$, denoted by $\sigma \star g$. We obtain a similar homomorphism from the $1$-cell $(f,1)$, sending $\tau$ in $\pi_{n-1}(\trace{\Xr}{y}{z},g)$ to the homotopy class of the map $t\mapsto f \star \tau(t)$, denoted by $f\star\tau$.

Let $\sigma, \sigma'$ in $\pi_{n-1}(\trace{\Xr}{x}{y},f)$ and $\tau, \tau'$ in $\pi_{n-1}(\trace{\Xr}{y}{z},g)$. 
The following exchange relation
\[
(\sigma \star \tau)\cdot(\sigma' \star \tau') = (\sigma \cdot \sigma')\star(\tau \cdot\tau'),
\]
where $\cdot$ denotes the product of homotopy classes in homotopy groups, holds in $\pi_{n-1}(\trace{\Xr}{x}{z},f\star g)$.
Using this relation, we have
\[
(\sigma \star g)\cdot(f\star \tau) = (\sigma \cdot f)\star(g\cdot\tau) =\sigma\star\tau= (f\cdot\sigma )\star(\tau\cdot g) = (f\star \tau) \cdot (\sigma \star g).
\]
for all $\sigma$ in $\pi_{n-1}(\trace{\Xr}{x}{y},f)$ and $\tau$ in $\pi_{n-1}(\trace{\Xr}{y}{z},g)$. We conclude via the commutator condition that~$\sysp{n}{\Xr}$ admits a composition pairing, given by $\nu_{f,g}(\sigma,\tau) =  \sigma \star \tau$.
\end{proof}

\begin{theorem}
\label{T:CompositionPairingOnPinX}
Let $\Xr=(X,dX)$ be a dispace.
For each $n\leq 1$ (resp.~$n\geq 2$) there exists a split object $\Cr^n_\Xr$ (resp.~internal group $\Cr^n_\Xr$) in $\catxp$ such that
\[
\sysp{n}{\Xr}_f = (\Cr^n_\Xr)_f,
\]
for all traces $f$ of $\Xr$, and this assignment is functorial in $\Xr$.
\end{theorem}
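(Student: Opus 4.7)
The plan is to apply directly the two equivalences of categories recalled in the preliminaries, namely
\[
\natsysnu{\Br}{\gp} \simeq \gpos{\cat_{\Br_0}/\Br}
\qquad\text{and}\qquad
\natsysnu{\Br}{\psetcat} \simeq Split(\cat_{\Br_0}/\Br)
\]
from~\ref{S:LaxSystemToInternalGroups} and \ref{S:SplitObject}, with $\Br = \diP{\Xr}$. For $n\geq 2$, Lemma~\ref{L:CompositionPairingOnPinX_n>=1} produces a composition pairing on the natural system of groups $\sysp{n}{\Xr}$, so the equivalence yields an internal group $\Cr^n_\Xr$ in $\catxp$. For $n=1$, Lemma~\ref{L:CompositionPairingOnPinX_n=1} produces a composition pairing on the natural system of pointed sets $\sysp{1}{\Xr}$, and the second equivalence yields a split object $\Cr^1_\Xr$. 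For $n=0$, the natural system $\sysp{0}{\Xr}$ is valued in pointed singletons, so admits a unique composition pairing, and the same equivalence produces a split object $\Cr^0_\Xr$ (which is simply $(\diP{\Xr}, id_{\diP{\Xr}})$ up to canonical identification). The fibre-wise identification $\sysp{n}{\Xr}_f = (\Cr^n_\Xr)_f$ is immediate from the construction recalled in~\ref{S:LaxSystemToInternalGroups}, in which the hom-sets of the constructed category are defined as coproducts $\coprod_{f} D_f$ over the fibres.

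For functoriality in $\Xr$, I would proceed as follows. Let $\varphi: \Xr \fl \Yr$ be a morphism of dispaces. It induces a functor $\diP{\varphi}: \diP{\Xr} \fl \diP{\Yr}$ and, via the natural transformation $\overrightarrow{\varphi}_*$ between trace diagrams described in~\ref{SSS:TraceDiagrams}, a morphism $(\diP{\varphi}, \tau^n_\varphi): (\diP{\Xr}, \sysp{n}{\Xr}) \fl (\diP{\Yr}, \sysp{n}{\Yr})$ in $\opnatnu{\V}$, where $\V$ is $\psetcat$, $\gp$ or $\ab$ depending on $n$. The compatibility of $\tau^n_\varphi$ with the composition pairings follows from the fact that $\varphi$ preserves concatenation of dipaths, so that the diagram defining compatibility commutes at the level of trace spaces and passes to homotopy classes. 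For natural systems of groups this compatibility is automatic by Remark~\ref{RemarkCompatility}; for the pointed case $n\leq 1$ it is immediate from the explicit formula in Lemma~\ref{L:CompositionPairingOnPinX_n=1}. Applying the equivalences then produces a morphism of split objects, respectively of internal groups, over the functor $\diP{\varphi}$.

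The expected main obstacle is essentially administrative: one must verify that the above two equivalences extend to the indexed setting of $\opnatnu{\V}$, so as to yield a functor from $\dtop$ to $\cat$ sending $\Xr$ to the underlying category of $\Cr^n_\Xr$. This amounts to checking that the construction from~\ref{S:LaxSystemToInternalGroups} is natural in the base category $\Br$, i.e.\ that a morphism $(\Phi,\tau)$ of natural systems with composition pairings over different base categories induces a functor between the resulting categories over the codomain, compatible with the group (or split) structure. This is a routine but somewhat lengthy verification that traces back to the coherence conditions satisfied by $\tau$ with respect to the pairings.

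Once this is established, the assignment $\Xr \mapsto \Cr^n_\Xr$ is a composite of functors $\dtop \fl \opnatnu{\V} \fl \cat$, which gives the desired functoriality and completes the proof.
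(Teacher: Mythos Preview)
Your proposal is correct and follows essentially the same route as the paper: invoke Lemmas~\ref{L:CompositionPairingOnPinX_n=1} and~\ref{L:CompositionPairingOnPinX_n>=1} to obtain composition pairings, then apply the equivalences of~\ref{S:LaxSystemToInternalGroups} and~\ref{S:SplitObject} to produce the split object or internal group $\Cr^n_\Xr$, with the fibre identification immediate from the coproduct description of the hom-sets.

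The only difference worth noting concerns functoriality. You argue abstractly, by producing a morphism in $\opnatnu{\V}$ from $\varphi$ and then invoking naturality of the equivalence in the base category; you correctly flag this last step as the ``administrative obstacle''. The paper sidesteps this entirely by writing down $\Cr^n_\varphi$ explicitly: it is $\varphi$ on $0$-cells and sends a $1$-cell $(\sigma,f)$ to $(\pi_{n-1}(\varphi_{x,y})(\sigma),\diP{\varphi}(f))$, with functoriality then inherited from that of $\pi_{n-1}$ and $\overrightarrow{\mathbf{P}}$. Your approach is cleaner conceptually and makes the factorisation $\dtop \fl \opnatnu{\V} \fl \cat$ explicit, at the cost of the verification you mention; the paper's approach is more concrete and avoids that verification, at the cost of obscuring the factorisation. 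Both are valid.
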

\begin{proof}
Using the equivalences of categories recalled in \ref{S:SplitObject} (resp.~in~\ref{S:LaxSystemToInternalGroups}), and Lemma~\ref{L:CompositionPairingOnPinX_n=1} (resp.~Lemma~\ref{T:CompositionPairingOnPinX})  we obtain a split object $\Cr^1_\Xr$ (resp.~an internal group $\Cr^n_\Xr$) in $\textbf{Cat}_{X}/ \diP{\Xr}$ associated to $\sysp{1}{\Xr}$ (resp.~$\sysp{n}{\Xr}$ for $n\geq 2$).
Let us prove that this assignment defines a functor
\[
\Cr^n_- : \dtop \fl \cat.
\]
Any morphism $\varphi:\Xr\fl\Yr$ of dispaces induces continuous maps $\varphi_{x,y}:\trace{\Xr}{x}{y}\fl\trace{\Yr}{\varphi(x)}{\varphi(y)}$ for all points $x,y$ in $X$ such that $\trace{\Xr}{x}{y}\neq\emptyset$. 
We define a functor $\Cr^n_\varphi:\Cr^n_\Xr\fl\Cr^n_\Yr$ on a $0$-cell $x$ and a $1$-cell $(\sigma,f)$ of $\Cr^n_\Xr$ by setting $\Cr^n_\varphi(x)=\varphi(x)$, and
\[
\Cr^n_\varphi(\sigma,f)=(\pi_{n-1}(\varphi_{x,y})(\sigma), \diP{\varphi}(f)). 
\]
Functoriality follows from that of $\pi_{n-1}$ and $\overrightarrow{\mathbf{P}}$.
\end{proof}

\subsubsection{Natural homotopy as a split object or internal group}
Let us describe the categories $\Cr^n_\Xr$ for~$n\geq 0$. The $0$-cells of $\Cr^n_\Xr$ are the points of $X$, and the set of $1$-cells of $\Cr^n_{\Xr}$ with source $x$ and target $y$ is given by
\[
\Cr^{n}_{\Xr}(x,y) = \coprod_{f\in\diP{\Xr}(x,y)} \sysp{n}{\Xr}_f.
\]
The projection $p$ onto the second factor extends the category $\Cr^n_\Xr$ into an object of $\cat_X/\diP{\Xr}$.

For~$n\leq 1$, the functor $p$ is split by $\epsilon_n : \diP{X} \fl \Cr^n_\Xr$ defined on any trace $f$ on $\Xr$ by $\epsilon_n(f) = ([f], f)$. 
Note that for any trace $f$, $\sysp{0}{\Xr}_f = \set{\,[f]\,}$, hence $\epsilon_0(\diP{X}) = \Cr^0_\Xr$. The composition is defined by
\[
([f'],f)([g'],g) = ([f'\star g'] , f\star g),
\]
for all $[f'] \in \sysp{n}{\Xr}_f$ and $[g'] \in \sysp{n}{\Xr}_g$. Note that $\Cr_\Xr^0$ is isomorphic to $\diP{\Xr}$.

For~$n\geq 2$, the functor $p$ is split by the identity map $\eta : \diP{X} \fl \Cr^n_\Xr$ defined by $\eta(f) = (1_{D_f},f)$, where~$1_{D_f}$ is the homotopy class of the constant loop equal to $f$. The inverse map is given by the inverse in each homotopy group, that is $(\sigma,f)^{-1} = (\sigma^{-1},f)$. Recall that the product in $\cat_X/\diP{\Xr}$ is the fibred product over $\diP{\Xr}$, so we can use the internal multiplication in each homotopy group to define the multiplication map $\mu$ by setting $\mu((\sigma,f),(\sigma',f)) = (\sigma\cdot\sigma',f)$.
The composition of $(\sigma,f)$ and $(\tau, g)$, for homotopy classes $\sigma$ and $\tau$ above $f$ and $g$ respectively, is given by
\[
(\sigma,f)\star_0 (\tau,g) = (\nu_{f,g}(\sigma,\tau), f\star g) = (\sigma\star\tau, f\star g).
\]

\subsubsection{Natural homology as internal group}
\label{SSS:NaturalHomologyAsInternalGroup}
Recall from Remark~\ref{RemarkCompatility} that as a consequence of the commutation condition and the triviality of the compatibility criterion for natural transformations, the categories $\natsys{\diP{\Xr}}{\ab}$ and $\natsysnu{\diP{\Xr}}{\ab}$ coincide. For all $n\geq 1$, the natural system~$\sysh{n}{\Xr}$ is thus equipped with a composition pairing, and via the equivalence
\[
\abo{\catxp}\cong \natsysnu{\diP{\Xr}}{\ab} 
\]
we obtain an internal abelian group $\Ar^n_\Xr$ in the category~$\catxp$.
Moreover, using similar arguments as in the proof of Proposition~\ref{T:CompositionPairingOnPinX}, one proves that the assignment $\Ar_-^n:\dtop\fl\cat$ is functorial for all~$n\geq 1$.

\subsection{Fundamental category of a dispace}
\label{SS:FundamentalCategory}

\subsubsection{Fundamental category}
The \emph{fundamental category} of a dispace $\Xr$, denoted by $\fcat{\Xr}$, is the homotopy category of $\diP{\Xr}$ when interpreted as a $2$-category. Explicitly, the trace category $\diP{\Xr}$ can be extended into a $(2,1)$-category by adding $2$-cells corresponding to dihomotopies of traces.
The fundamental category is the quotient of this $(2,1)$-category by the congruence generated by these $2$-cells. We refer the reader to \cite{gouhha,grandisbook} for a fuller treatment of fundamental categories of dispaces.
This assignment defines a functor
\[
\overrightarrow{\boldsymbol\Pi} : \dtop \fl \cat.
\]
Given a dispace $\Xr$, consider the quotient functor
$\pi : \diP{\Xr} \fl \fcat{\Xr}$,  which is the identity on $0$-cells and which associates a trace $f$ to its class $[f]$ modulo path-connectedness. Similarly to~{\cite[Theorem 1]{2017arXiv170905702G}}, we have the following result:
\begin{proposition}
Given a dispace $\Xr$, suppose that there exists a functorial section $s$ of the functor $\pi : \diP{\Xr} \fl \fcat{\Xr}$. Then the natural system $\sysp{n}{\Xr}$ is trivial for all $n\geq 2$.
\end{proposition}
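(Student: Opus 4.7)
The plan is to establish that the existence of a functorial section $s$ of $\pi$ forces each path-component of the trace space $\trace{\Xr}{x}{y}$ to be weakly contractible, from which the triviality of $\sysp{n}{\Xr}_f = \pi_{n-1}(\trace{\Xr}{x}{y}, f)$ for $n\geq 2$ and every trace $f$ follows at once.

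First I would unpack the data of a functorial section. For each dihomotopy class $[f] \in \fcat{\Xr}(x,y)$, the section picks a distinguished representative $s([f])$ lying in the same path-component as $f$ inside $\trace{\Xr}{x}{y}$, and functoriality imposes the rigidity $s([f]\star[g]) = s([f])\star s([g])$ together with $s([1_x]) = 1_x$. In particular the family of canonical representatives is closed under concatenation on either side, and the set-theoretic composite $s\circ\pi$ is a retraction of $\diP{\Xr}(x,y) = \pi_0(\trace{\Xr}{x}{y})$ onto the image of $s$. It is this closure property that will supply the homotopical rigidity I need.

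Given $\sigma \in \sysp{n}{\Xr}_f$ with $n\geq 2$, represented by a pointed map $\alpha : (S^{n-1},*) \to (\trace{\Xr}{x}{y}, f)$, each trace $\alpha(t)$ lies in the path-component of $f$, so $[\alpha(t)] = [f]$ and hence $s([\alpha(t)]) = s([f])$ is constant in $t$. I would then use the composition pairing of Lemma~\ref{L:CompositionPairingOnPinX_n>=1} together with the functoriality of $s$ to exhibit, for each $t$, a dihomotopy from $\alpha(t)$ to $s([f])$, obtained by pre- and post-concatenating $\alpha(t)$ with the canonical identity traces $s([1_x])$ and $s([1_y])$ and then deforming along the section. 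The compatibility $s([f\star g]) = s([f])\star s([g])$ is precisely what ensures that these local dihomotopies can be organized into a continuous null-homotopy of $\alpha$ as $t$ varies over $S^{n-1}$, using the naturality square of the composition pairing to control how the contractions fit together.

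The main obstacle will be upgrading the purely set-theoretic section $s$ into a continuous contraction of $\trace{\Xr}{x}{y}$ onto its discrete set of path-components, since $s$ is only a functor and carries no a priori compatibility with the compact-open topology on trace spaces. Following the strategy of Theorem~1 in~\cite{2017arXiv170905702G}, I would exploit the monotonic-reparametrization invariance of traces together with the equation $s([f\star g]) = s([f])\star s([g])$ to patch the pointwise dihomotopies produced by $s$ into a parameter-continuous homotopy. Once this step is carried out, $\pi_{n-1}(\trace{\Xr}{x}{y},f) = 0$ for every $n\geq 2$ and every $f$, so $\sysp{n}{\Xr}$ is the trivial natural system, as claimed.
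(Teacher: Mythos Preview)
You correctly identify the target --- contractibility of each path-component of the trace spaces --- and you are right that the continuity issue is where the work lies. But the concrete construction you propose does not produce any deformation. Pre- and post-concatenating $\alpha(t)$ with $s([1_x])$ and $s([1_y])$ is vacuous: functoriality of $s$ forces $s([1_x]) = 1_x$ and $s([1_y]) = 1_y$, so you recover $\alpha(t)$ unchanged, and there is nothing left to ``deform along the section''. The composition pairing together with identity traces carries no homotopical content by itself; what is missing is a mechanism that actually moves an arbitrary trace $g \in [f]$ toward the canonical representative $s([f])$.

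The paper supplies exactly this missing mechanism, and it is the heart of the argument: one applies the section not to the whole trace, nor to the identities at its endpoints, but to its restriction to a \emph{variable middle sub-interval}. For $g \in [f]$ and a parameter $r \in [0,1]$ one sets
\[
H_{[f]}(g,r) \;=\; g_{|[0,\,r/2]} \;\star\; s\bigl([\,g_{|[r/2,\,1-r/2]}\,]\bigr) \;\star\; g_{|[1-r/2,\,1]}.
\]
At $r = 1$ the middle degenerates to a constant and one recovers $g$; at $r = 0$ the middle is all of $[0,1]$ and one obtains $s([g]) = s([f])$. The section respects source and target, so the three pieces glue for every $r$, and functoriality is what makes this compatible across different values of $r$ and across concatenation. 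Both you and the paper ultimately defer the continuity verification to the argument of~\cite{2017arXiv170905702G}, but without a formula of this shape there is nothing whose continuity one can even attempt to check; your sketch never reaches the point where that reference becomes applicable.
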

\begin{proof}
We show that each trace space is contractible. Let $\overrightarrow{t}(\Xr)$ (resp.~$\overrightarrow{t}(\Xr)\times\![0,1]$) denote the natural system of topological spaces on $\fcat{\Xr}$ which associates the space $[f]\subseteq \trace{\Xr}{x}{y}$ (resp.~$[f] \times [0,1]$) to each class $[f] : x \fl y$. For a dipath $g$ in $[f]$, denote by $g_{|[s,r]}$ the restriction of $g$ to the interval $[s,r]\subseteq[0,1]$. Now we define a natural transformation $H : \overrightarrow{t}(\Xr)\times[0,1] \dfl \overrightarrow{t}(\Xr) $ such that the component $H_{[f]}$ sends a pair $(g,s)\in [f]\times[0,1]$ to the dipath

\[ 
H_{[f]}(g,s)(t) =  \begin{cases} 
      g(t)  & t\in [0,\frac{s}{2}], \\
      s([g_{|[\frac{s}{2},1 - \frac{s}{2}]}]) & t\in[\frac{s}{2}, 1 - \frac{s}{2}], \\
      g(t) & t \in [1 - \frac{t}{2}, 1] .
   \end{cases}
\]
Then $H_{[f]}(g,-)$ is a homotopy from $g$ to $s([f])$ for every $g$ in $[f]$. Thus every connected component of every trace space of $\Xr$ is contractible.
\end{proof}

\subsubsection{Remarks}
Recall that the homotopy groups $\pi_n(X,x)$ and $\pi_n(X,y)$ of a topological space $X$ are isomorphic for any path-connected points $x$ and $y$ of $X$. In the definition of natural homotopy we consider the homotopy groups of trace spaces $\trace{\Xr}{x}{y}$ based at each trace $f$.
However, choosing a single base-point in each connected component of each trace space of a dispace $\Xr$ requires a section as described above. Furthermore, for such a section to give rise to a natural system, it must be functorial. In this case, the only non-trivial homotopy functor is $\sysp{1}{\Xr}$, and this homotopic information is provided by $\fcat{\Xr}$: the hom-set $\fcat{\Xr}(x,y)$ is equal to $\pi_0(\trace{\Xr}{x}{y})$.

Finally, note that natural homology decomposes, for any trace $f: x \fl y$ on $\Xr$, into
\[
\sysh{n}{\Xr}_{f} \: \cong \: \bigoplus_{[f] \in \fcat{\Xr}(x,y)} H_{n-1}([f])
\]
where $H_{n-1}([f])$ is the $(n-1)^{th}$ singular homology of the connected space $[f] \subset \trace{\Xr}{x}{y}$.

\section{Time-reversal invariance}
\label{S:TimeReversalInvariance}

In this section we study properties on dispaces that are invariant by time-reversal. First, we define the notion of time-reversed dispace and study the behavior of natural homology and natural homotopy with respect to this reversal. Finally, in Subsection~\ref{subsec:relhom}, we introduce a notion of relative homotopy for dispaces, and establish a long exact sequence, as in the case of regular topological spaces, using the homological category structure on $\act$ as introduced by Grandis in~\cite{marco2013homological}.

\subsection{Time-reversal in dispaces}

\subsubsection{Time-reversed dispaces}
Given a dispace $\Xr=(X,dX)$, for any dipath $f$ in $dX$, we denote by $f^\sharp$ the dipath defined by
\[
f^\sharp(t) = f(1-t),
\]
for all $t$ in $[0,1]$.
We define its \emph{time-reversed dispace}, or \emph{opposite dispace}, as the dispace $\Xr^\sharp=(X,dX^\sharp)$ where $dX^\sharp$ is defined by
\[
dX^\sharp = \set{f^\sharp \; | \; f \in dX }.
\]
Note that $dX^\sharp$ is easily verified to be a set of directed paths according to the conditions listed in~\ref{rem:dirspaces}.
This defines a functor $(-)^\sharp: \dtop \fl \dtop$, sending a dispace $\Xr$ to its opposite. Notice that if $\phi:\Xr\fl\Yr$ is a morphism of dispaces, this functor leaves the continuous map $\phi:X\fl Y$ unchanged, since $(\phi_\ast f)^\sharp = \phi_\ast(f^\sharp)$.

\subsubsection{Reversal properties}
A dispace $\Xr$ is called \emph{time-symmetric} if the dispaces $\Xr$ and $\Xr^\sharp$ are isomorphic. 
In that case, by functoriality of $\overrightarrow{\mathbf{P}}$ and $\Cr^n_{-}$, there exist covariant isomorphisms
\[
\diP{\Xr} \overset{\sim}{\longrightarrow} \diP{\Xr^\sharp},
\qquad
\sysp{n}{\Xr} \overset{\sim}{\longrightarrow} \sysp{n}{\Xr^\sharp},
\qquad
\text{and} 
\qquad
\Cr_{\Xr}^n \overset{\sim}{\longrightarrow} \Cr_{\Xr^\sharp}^n.
\]

A dispace $\Xr=(X,dX)$ is called \emph{time-contractible} when $dX = dX^\sharp$. In that case any dipath is reversible, that is $f\in dX$ implies $f^\sharp \in dX$.

Note that for a dispace $\Xr=(X,dX)$, $dX = X^{[0,1]}$ implies that $\Xr$ is time-contractible but the converse is not true in general. Thus the directed homotopy of a time-contractible dispace $\Xr$ does not necessarily coincide with the homotopy of its underlying space $X$ as shown in Proposition~\ref{P:DirectedHomotopy<->Homotopy}.

A functor $F : \dtop \rightarrow \cat$ is \emph{time-reversal} if the following diagram 
\[
\xymatrix{
\dtop 
  \ar[r] ^-{F}
  \ar[d] _{(-)^{\sharp}}
&
\cat
  \ar[d] ^{(-)^{o}}
\\
\dtop
  \ar[r] _-{F}
&
\cat
}
\]
commutes up to isomorphism.
Such a functor is \emph{strongly time-reversal} if there exists a natural isomorphism $F((-)^\sharp) \dfl F(-)^o$.
A functor $F : \dtop \rightarrow \V$ is \emph{time-symmetric with respect to a category $\V$} if the following diagram 
\[
\xymatrix{
\dtop 
  \ar[r] ^-{F}
  \ar[d] _{(-)^{\sharp}}
&
\V
  \ar[d] ^{\rotatebox{90}{=}}
\\
\dtop
  \ar[r] _-{F}
&
\V
}
\]
commutes up to isomorphism. Such a functor is \emph{strongly time-symmetric with respect to $\V$} if there exists a natural isomorphism $F((-)^\sharp)\dfl F$.

\subsubsection{Time-symmetry of directed homology and homotopy}
\label{SSS:TimeSymmetricDirectedHomotopyHomology}
For any dispace~$\Xr$ the equalities
\[
\diP{\Xr^\sharp} = \diP{\Xr}^o \qquad \text{and} \qquad \fcat{\Xr^\sharp} = \fcat{\Xr}^o
\]
holds in $\cat$, hence the functors $\overrightarrow{\mathbf{P}}$ and $\overrightarrow{\boldsymbol\Pi}$ are strongly time-reversal.
The functor which sends a dispace $\Xr$ to $\Fact{\diP{\Xr}}$ is strongly time-symmetric with respect to $\cat$. Indeed, the isomorphism of categories 
\[
\Fact^\sharp : \Fact{\diP{\Xr}} \fl \Fact({\diP{\Xr^\sharp}})
\]
sending a trace $f$ to its opposite $f^\sharp$ and a $1$-cell of $(u,v)$ in $\Fact{\diP{\Xr}}$ to the $1$-cell $(v^\sharp, u^\sharp)$, is the component at $\Xr$ of a natural isomorphism.
By extension, we show that $\overrightarrow{P}_{\! n}$ and $\overrightarrow{H}_n$ are strongly time-symmetric with respect to $\opnat{\act}$. 

For $n\geq 0$, we compare the functors $\sysp{n}{\Xr}$ and $\sysp{n}{\Xr^\sharp}$ in $\natsys{\diP{\Xr}}{\act}$ by precomposing the latter with the isomorphism~$\Fact^\sharp$. 
Observe that, for all points $x,y$ in $X$, we have homeomorphisms
\[
\alpha_{x,y}: \trace{\Xr}{x}{y} \fl \trace{\Xr^\sharp}{y}{x}
\]
sending a trace $f$ to its opposite $f^\sharp$. These induce group isomorphisms 
$\sysp{n}{\Xr}_f \overset{\sim}{\longrightarrow} \sysp{n}{\Xr^\sharp}_{f^\sharp}$ for all $n \geq 2$. 
By definition, $(\Fact^\sharp)^\ast \sysp{n}{\Xr^\sharp}_f = \sysp{n}{\Xr^\sharp}_{f^\sharp}$, so we get components of a natural isomorphism
\begin{align*}
\alpha_{f}: \sysp{n}{\Xr}_f &\longrightarrow (\Fact^\sharp)^\ast \sysp{n}{\Xr^\sharp}_f \\
[\sigma] = [(s,t) \mapsto \sigma_s (t)] &\longmapsto [(s,t) \mapsto \sigma_s(1-t)] =: [\sigma^\sharp],
\end{align*}
where $s$ is the parameter for the loop in the trace space, and $t$ is the parameter for the dipath $\sigma_s$. 
Thus the pair $(\Fact^\sharp, \alpha)$ is an isomorphism in the category $opNat(\gp)$. Such an isomorphism can similarly be established for natural homotopy in the case $n=1$. The functor $F^\sharp$ and the isomorphisms are components at $\Xr$ of natural isomorphisms, hence $\overrightarrow{P}_{\!\!n}$ is strongly time-symmetric with respect to $\opnat{\act}$ for all $n\geq 1$. 

A corresponding isomorphism for natural homology,
$\sysh{n}{\Xr} \cong \sysh{n}{\Xr^\sharp}$,
can be similarly established in $\opnat{\ab}$ using the functor $\Fact^\sharp$ and the homeomorphisms $\alpha_{x,y}$, showing that $\overrightarrow{H}_n$ is strongly time-symmetric with respect to $\opnat{\ab}$ for all $n\geq 1$. 

\subsection{Time-reversibility of natural homotopy}

The covariant isomorphism $(\diP{\Xr},\sysp{n}{\Xr})\cong (\diP{\Xr^\sharp},\sysp{n}{\Xr^\sharp})$ in $\opnat{\V}$ is due to a loss of information concerning composition in the base category. Indeed, the passage to the opposite category is not witnessed by in the factorization category, as illustrated by the isomorphism of categories $\Fact{ ^\sharp}$ defined above. However, composition pairings allow us to witness the passage to the dual category $\diP{\Xr}^o = \diP{\Xr^\sharp}$ via time-reversal as a passage to the dual category of $\Cr^{n}_\Xr$.

Following Theorem~\ref{T:CompositionPairingOnPinX}, the category $\Cr^{n}_\Xr$ with the projection $p:\Cr^{n}_\Xr \fl \diP{\Xr}$ onto the second factor is an internal group in $\catxp$. On the other hand, the category $\Cr^{n}_{\Xr^\sharp}$ obtained from the natural system $\sysp{n}{\Xr^\sharp}$ via the construction given in~\ref{SS:DirectedHomotopyAsInternalGroup} has $0$-cells $x\in X$, while $1$-cells are of the form $\map{(\sigma^\sharp,f^\sharp)}{y}{x}$ where $\sigma^\sharp \in \sysp{n}{\Xr^\sharp}_{f^\sharp}$ and $\map{f^\sharp}{y}{x}$ is a trace in $\Xr^\sharp$. Composition is given by 
\[
(\tau^\sharp,g^\sharp)\star^{\Cr^{n}_{\Xr^\sharp}}_0(\sigma^\sharp, f^\sharp) = (\tau^\sharp\star\sigma^\sharp, g^\sharp\star f^\sharp).
\]
We denote the associated projection by $p^\sharp$.
We define for $n\geq 2$ 
\[
I_n(\Xr) : \Cr^{n}_{\Xr^\sharp} \fl (\Cr^{n}_\Xr)^o,
\]
the isomorphism of categories which is the identity on $0$-cells, and which sends a $1$-cell 
$(\sigma^\sharp,f^\sharp)$ of $\Cr^{n}_{\Xr^\sharp}$ to $(\sigma , f)^o$.
The functoriality of~$I_n(\Xr)$ follows from the equality
\[
(\tau^\sharp ,g^\sharp)\star^{\Cr^{n}_{\Xr^\sharp}}_0 (\sigma^\sharp, f^\sharp) = (\tau^\sharp \star \sigma^\sharp , g^\sharp\star f^\sharp) = ((\sigma\star\tau)^\sharp , (f\star g)^\sharp).
\]
The opposite group $(\Cr^n_\Xr)^o$ can be interpreted as an internal group in $\cat_X / \diP{\Xr^\sharp}$ by composing the projection $p^o: (\Cr^n_\Xr)^o \fl \diP{\Xr}^o$ with the canonical isomorphism $\diP{\Xr}^o \simeq \diP{\Xr^\sharp}$. We denote by $\widetilde{p}^o$ this composition. 
Then the following diagram commutes 
\[
\xymatrix@C=1em @R=1.5em{
\Cr^n_{\Xr^\sharp}
  \ar[rr] ^{I_n(\Xr)}
  \ar[dr] _-{p^\sharp}
&
&
(\Cr^n_\Xr)^o
  \ar[dl] ^-{\widetilde{p}^o}
\\
& \diP{\Xr^\sharp} &
}
\]
We thereby deduce that $I_n(\Xr)$ is a morphism of $\cat_X / \diP{\Xr^\sharp}$. 
Furthermore, it is a group isomorphism, since the fibre groups above a $1$-cell $f^\sharp$ of $\diP{\Xr^\sharp}$ are isomorphic:
\[
(\Cr^n_\Xr)^o_{f^\sharp} = (\Cr^n_\Xr)_f = \sysp{n}{\Xr}_f \cong \sysp{n}{\Xr^\sharp}_{f^\sharp} = (\Cr^n_{\Xr^\sharp})_{f^\sharp}.
\]
An isomophism $\Cr^{1}_{\Xr^\sharp} \cong (\Cr^{1}_X)^o$ can similarly be established in the category $Split(\cat_X/\diP{\Xr^\sharp})$. We have thus proved the following result.

 \begin{proposition}
 \label{prop:timedual1}
 Given a dispace $\Xr = (X,dX)$, $\Cr^{n}_{\Xr^\sharp}$ and $(\Cr^{n}_{\Xr})^o$ are isomorphic in $\gpos{\cat_X/\diP{\Xr^\sharp}}$ for all $n\geq 2$, and in $Split(\cat_X/\diP{\Xr^\sharp})$ for $n=1$.
In particular, the functors $\Cr^n_{-}$ are time-symmetric for all $n\geq 1$.
 \end{proposition}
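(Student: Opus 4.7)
The plan is to package the explicit functor $I_n(\Xr)$ built immediately above the statement into an isomorphism in $\gpos{\cat_X/\diP{\Xr^\sharp}}$ for $n\geq 2$ and in $Split(\cat_X/\diP{\Xr^\sharp})$ for $n=1$, and then upgrade the assignment $\Xr \mapsto I_n(\Xr)$ to a natural isomorphism in order to obtain the time-reversal conclusion.

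First I would identify the ambient category on both sides. By Theorem~\ref{T:CompositionPairingOnPinX} applied to $\Xr^\sharp$, the category $\Cr^n_{\Xr^\sharp}$ is an internal group (resp.\ a split object) in $\cat_X/\diP{\Xr^\sharp}$ via the projection $p^\sharp$, while $(\Cr^n_\Xr)^o$ is an internal group in $\cat_X/\diP{\Xr}^o$ via $p^o$. Using the equality $\diP{\Xr^\sharp}=\diP{\Xr}^o$ from~\ref{SSS:TimeSymmetricDirectedHomotopyHomology}, I would re-interpret $(\Cr^n_\Xr)^o$ as an internal group in $\cat_X/\diP{\Xr^\sharp}$ via $\widetilde{p}^o$. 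The commutative triangle displayed just before the proposition then exhibits $I_n(\Xr)$ as a morphism in $\cat_X/\diP{\Xr^\sharp}$, and since it is the identity on objects it remains only to check that it is fibrewise an isomorphism compatible with the relevant internal structure.

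Fibrewise above $f^\sharp$, the functor $I_n(\Xr)$ is the bijection $\sysp{n}{\Xr^\sharp}_{f^\sharp}\to\sysp{n}{\Xr}_f$ induced by the homeomorphism $\alpha_{x,y}:\trace{\Xr^\sharp}{y}{x}\to\trace{\Xr}{x}{y}$, which is a group isomorphism for $n\geq 2$ and a pointed-set isomorphism for $n=1$. Compatibility with the multiplication $\mu$, the unit $\eta$ and the inversion (resp.\ with the splitting in the case $n=1$) is then immediate from the fibrewise definition of these structure maps, since the opposite group $(\Cr^n_\Xr)^o$ has the very same fibrewise group law as $\Cr^n_\Xr$; the opposite-ness of the base is absorbed precisely by the reversal $f \mapsto f^\sharp$ carried by $I_n(\Xr)$.

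Finally, to deduce the time-reversal property of $\Cr^n_{-}$ I would check that the family $(I_n(\Xr))_{\Xr}$ is natural in $\Xr$: for a morphism $\varphi:\Xr\to\Yr$, the induced functor $\Cr^n_\varphi$ acts as $\varphi$ on points and as $\pi_{n-1}(\varphi_{x,y})$ on fibres, and the identity $(\varphi_\ast f)^\sharp=\varphi_\ast(f^\sharp)$ ensures that the relevant naturality square commutes on both components. The chief subtlety throughout is the bookkeeping of the two identifications $\diP{\Xr}^o\cong\diP{\Xr^\sharp}$ and $\sysp{n}{\Xr}_f\cong\sysp{n}{\Xr^\sharp}_{f^\sharp}$ so that fibres, compositions and unit sections line up consistently on the dual side; no topological input beyond Lemmas~\ref{L:CompositionPairingOnPinX_n=1} and~\ref{L:CompositionPairingOnPinX_n>=1} is needed.
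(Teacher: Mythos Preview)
Your approach is essentially the same as the paper's: the argument preceding the proposition already constructs $I_n(\Xr)$, checks functoriality via $(\tau^\sharp\star\sigma^\sharp,g^\sharp\star f^\sharp)=((\sigma\star\tau)^\sharp,(f\star g)^\sharp)$, verifies the triangle over $\diP{\Xr^\sharp}$, and observes that the fibrewise maps are the isomorphisms $\sysp{n}{\Xr}_f\cong\sysp{n}{\Xr^\sharp}_{f^\sharp}$ coming from the homeomorphisms $\alpha_{x,y}$. Your sketch of naturality in $\Xr$ goes slightly beyond what the proposition itself asserts; the paper carries out that step separately in the paragraph following the proposition, in order to obtain Theorem~\ref{Theorem:MainTheoremA}.
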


\subsubsection{}
For any $n\geq 0$, the functors $I_n(\Xr)$ give components of a natural transformation. Indeed, by precomposing (resp.~composing) the functor $\Cr^n_-$ with $(\cdot)^\sharp$ (resp.~$(\cdot)^o$), any  morphism $\phi:\Xr\fl\Yr$ of dispaces yields a commuting diagram
\[
\xymatrix@C=5em{
\Cr^n_{\Xr^\sharp} 
	\ar[d] _-{\Cr^n_{\phi^\sharp}}
	\ar[r] ^-{I_n(\Xr)}
& 
(\Cr^n_\Xr)^o
	\ar[d] ^-{(\Cr^n_{\phi})^o}
 \\
\Cr^n_{\Yr^\sharp} 
	\ar[r] _-{I_n(\Yr)}
& (\Cr^n_\Yr)^o
}
\]
in $\cat$. Furthermore, as shown above, these components are all isomorphisms, that is 
there exists a natural isomorphism
\[
I_n: \Cr^n_{(-)^\sharp} \Longrightarrow (\Cr^n_{(-)})^o.
\]
We have thus proved the following result:

\begin{theorem}
\label{Theorem:MainTheoremA}
For any $n\geq 0$, the functor $\Cr^n_{(-)}: \dtop \fl \cat$ is strongly time-reversal.
\end{theorem}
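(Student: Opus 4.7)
The plan is to assemble the fiberwise isomorphisms $I_n(\Xr) : \Cr^n_{\Xr^\sharp} \fl (\Cr^n_\Xr)^o$ constructed just above Proposition~\ref{prop:timedual1} into the components of a natural transformation $I_n : \Cr^n_{(-)^\sharp} \dfl (\Cr^n_{(-)})^o$, and to observe that it is a natural isomorphism. The categorical content, namely that each component is a functor and in fact an isomorphism (of split objects for $n\leq 1$, of internal groups for $n\geq 2$), is already supplied by Proposition~\ref{prop:timedual1}. What remains is naturality in the dispace argument.

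The main step is therefore to check, for an arbitrary morphism $\phi : \Xr \fl \Yr$ of dispaces, that the square with top edge $I_n(\Xr)$, bottom edge $I_n(\Yr)$, left edge $\Cr^n_{\phi^\sharp}$ and right edge $(\Cr^n_\phi)^o$ commutes in $\cat$. Both composites act as $x\mapsto \phi(x)$ on $0$-cells, so the content lies on $1$-cells. On a $1$-cell $(\sigma^\sharp,f^\sharp)$ of $\Cr^n_{\Xr^\sharp}$, the right-hand composite yields
\[
(\Cr^n_\phi)^o \circ I_n(\Xr)(\sigma^\sharp,f^\sharp) \;=\; (\pi_{n-1}(\phi_{x,y})(\sigma),\, \diP{\phi}(f))^o,
\]
by the definition of $\Cr^n_\phi$ given in the proof of Theorem~\ref{T:CompositionPairingOnPinX}. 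The left-hand composite first produces $(\pi_{n-1}((\phi^\sharp)_{y,x})(\sigma^\sharp),\,\diP{\phi^\sharp}(f^\sharp))$ and then applies $I_n(\Yr)$; the equality of the two composites therefore reduces to the two identities
\[
\diP{\phi^\sharp}(f^\sharp) \;=\; (\diP{\phi}(f))^\sharp, \qquad (\phi^\sharp)_{y,x}(g^\sharp) \;=\; (\phi_{x,y}(g))^\sharp,
\]
where the second is understood as a homeomorphism between the relevant trace spaces. Both hold because the functor $(-)^\sharp$ leaves the underlying continuous map of $\phi$ unchanged, so pre- or post-composing with $\phi$ commutes with path reversal; passing these identities through $\pi_{n-1}$ then identifies $\tau := \pi_{n-1}((\phi^\sharp)_{y,x})(\sigma^\sharp)$ with $(\pi_{n-1}(\phi_{x,y})(\sigma))^\sharp$, which is exactly what is required.

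Once naturality is verified, the theorem follows immediately: by Proposition~\ref{prop:timedual1} every $I_n(\Xr)$ is a categorical isomorphism, hence $I_n$ is a natural isomorphism, and by the definition of \emph{strongly time-reversal} the functor $\Cr^n_{(-)} : \dtop \fl \cat$ has this property for every $n\geq 0$. The only real obstacle is bookkeeping: one must carefully track in which ambient category each equality takes place, since $I_n(\Xr)$ genuinely lives in $Split(\cat_X/\diP{\Xr^\sharp})$ or $\gpos{\cat_X/\diP{\Xr^\sharp}}$, but the naturality square needed for the theorem is required only after applying the forgetful functor to $\cat$. No further coherence data is therefore needed, and the routine check described above is enough to conclude.
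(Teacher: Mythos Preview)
Your proposal is correct and follows essentially the same approach as the paper: take the isomorphisms $I_n(\Xr)$ constructed before Proposition~\ref{prop:timedual1}, verify that they are natural in $\Xr$, and conclude via the definition of strongly time-reversal. The paper merely asserts that the naturality square commutes, whereas you actually carry out the verification on $1$-cells by reducing it to the identities $\diP{\phi^\sharp}(f^\sharp) = (\diP{\phi}(f))^\sharp$ and $(\phi^\sharp)_{y,x}(g^\sharp) = (\phi_{x,y}(g))^\sharp$; this is a welcome addition of detail rather than a different strategy.
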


A consequence of Theorem~\ref{Theorem:MainTheoremA} is that for any dispace $\Xr$, the category $\Cr^{n}_{\Xr}$ is dual to the category $\Cr^{n}_{\Xr^\sharp}$. It can similarly be shown that the functor $\Ar_{(-)}^n$ associated to natural homology is strongly time-reversal.
In the particular case of a time-symmetric spaces $\Xr$, the category $\Cr_\Xr^n$ is self-dual, \ie there exists a covariant isomorphism of categories 
\[
\Cr_\Xr^n \cong (\Cr_\Xr^n)^o.
\] 

\subsubsection{Time-reversibility with respect to opNat}
The time-reversibility of a functor with values in $\cat$ is expressed via duality of categories. However, given some category $\V$, we can define a notion of time-reversal with respect to $\opnat{\V}$ which is compatible with the interpretation of natural systems with composition pairings as categories when $\V = \act$.
Consider the functor
\[
(-)^\flat: \opnat{\V} \rightarrow \opnat{\V}
\]
which sends a pair $(\Cr, D)$ to the pair $(\Cr^o, (\Fact{^o})^\ast D)$, where $\Fact{^o} : \Fact{(\Cr^o)} \fl \Fact{\Cr}$ is the covariant functor sending a $0$-cell $f^o$ of $\Fact{\Cr^o}$ to $f$, and a $1$-cell $(v^o,u^o)$ to $(u,v)$. To a morphism
\[
(\Phi, \alpha):(\Cr, D) \rightarrow (\Cr',D')
\]
of $\opnat{\V}$, the functor $(-)^\flat$ associates the morphism $(\Phi^o, \alpha^o)$, where $\Phi^o$ is the opposite functor $\Cr^o\rightarrow(\Cr')^o$, and where the component $\alpha^o_{f^o}$ at $f^o$ a $1$-cell of $\Cr^o$ is the component $\alpha_f$ of $\alpha$ at $f$.

Then for $F$ a functor $\dtop\rightarrow\opnat{\V}$, we say that $F$ is \emph{time-reversal with respect to $\opnat{\V}$} if the following diagram
\[
\xymatrix{
\dtop 
  \ar[r] ^-{F}
  \ar[d] _-{(-)^\sharp}
& 
\opnat{\V}
  \ar[d] ^-{(-)^\flat} 
\\
\dtop 
  \ar[r] _-{F}
& 
\opnat{\V}
}
\] 
commutes up to isomorphisms of the form $(id, \alpha)$. Explicitly, this means that if $F(\Xr) = (\Cr,D)$, then $F(\Xr^\sharp) = (\Cr^o,D')$ with~$(\Fact{^o})^\ast D$ naturally isomorphic to $D'$.

Given $F$ a functor $\dtop\rightarrow\opnatnu{\act}$, we can extend to the following diagram
\begin{equation}
\label{E:ThreeCommutations}
\xymatrix{
\dtop 
  \ar[r] ^-{F}
  \ar[d] _-{(-)^\sharp}
& 
\opnatnu{\act}
  \ar[d] ^-{(-)^\flat} 
  \ar[r] ^-{\Er}
&
\cat
  \ar[d] ^-{(-)^o}
\\
\dtop 
  \ar[r] _-{F}
& 
\opnatnu{\act}
  \ar[r] _-{\Er}
&
\cat
}
\end{equation}
where the functor $\Er:\opnat{\act}\rightarrow \cat$ sends a pair $(\Cr,D,\nu)$ to the category in $\cat_{\Cr_0} / \Cr$ defined using the construction described in \ref{S:LaxSystemToInternalGroups} and \ref{S:SplitObject}.
The rightmost square commutes strictly. Indeed, denoting by $\Er_{(\Cr, D,\nu)}$ the category obtained from the natural system $(D,\nu)$ on the category $\Cr$, we have that $\Er_{(\Cr,D,\nu)^\flat}$ is the category with the same $0$-cells as $\Cr^o$ and in which $1$-cells are defined via the hom-sets
\[
\Er(y,x)= \coprod_{f^o \in \Cr^o(y,x)} D_f \, ,
\]
since by definition, $D^\flat_{f^o} = D_f$.
On the other hand, $\Er_{(\Cr,D,\nu)}$ has the same $0$-cells as $\Cr$ and $1$-cells are defined via the hom-sets
\[
\Er(x,y)= \coprod_{f\in \Cr(x,y)} D_f \, .
\]
Thus $\Er_{(\Cr,D,\nu)^\flat}$ coincides with $\Er_{(\Cr,D,\nu)}^o$. 
As consequence, if the leftmost square in diagram~\ref{E:ThreeCommutations} commutes up to isomorphism, then the outer square commutes up to isomorphism. We have thus proved the following result.
\begin{proposition}
\label{P:commutationThreeSquares}
Any functor $F:\dtop\fl\opnatnu{\act}$ which is time-reversal with respect to $\opnat{\act}$ can be extended into a time-reversal functor $\Er\circ F:\dtop\fl\cat$.
\end{proposition}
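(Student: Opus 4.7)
The plan is to derive the proposition by pasting together two commuting squares: the leftmost square in diagram~\eqref{E:ThreeCommutations}, which commutes up to isomorphism by hypothesis, and the rightmost square, which I will show commutes strictly. Once both squares commute (up to isomorphism), the outer rectangle automatically commutes up to isomorphism, giving the time-reversal property of $\Er\circ F$.

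The core step is therefore to verify that the rightmost square commutes strictly, i.e.\ that the equality
\[
\Er\big((\Cr,D,\nu)^\flat\big) \;=\; \Er(\Cr,D,\nu)^o
\]
holds in $\cat$ for every object $(\Cr,D,\nu)$ of $\opnatnu{\act}$. First I would unfold the left-hand side using the definition of $(-)^\flat$: the underlying category is $\Cr^o$, and the natural system is $(\Fact{^o})^\ast D$, which by construction satisfies $D^\flat_{f^o}=D_f$ and acts on factorizations $(v^o,u^o)$ via $D(u,v)$. Applying $\Er$ then produces a category with $0$-cells $\Cr_0$, hom-sets
\[
\coprod_{f^o\in\Cr^o(y,x)} D_f,
\]
and composition obtained from the pairing $\nu_{f,g}$ read backwards. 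Second, I would unfold the right-hand side: the category $\Er(\Cr,D,\nu)$ has the same $0$-cells and hom-sets $\coprod_{f\in\Cr(x,y)} D_f$, with composition governed by~$\nu$; taking the opposite simply reverses the direction of arrows. A direct comparison shows the two categories have identical $0$-cells, identical decompositions of hom-sets, and identical composition laws, so they coincide on the nose.

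Next I would treat morphisms: given $(\Phi,\alpha)$ in $\opnatnu{\act}$, by definition $(\Phi,\alpha)^\flat=(\Phi^o,\alpha^o)$ with $\alpha^o_{f^o}=\alpha_f$. Under $\Er$, both $\Er((\Phi,\alpha)^\flat)$ and $\Er(\Phi,\alpha)^o$ act as $\Phi$ on $0$-cells and as $\alpha_f$ on the fibre over each $f$, so strict equality persists at the level of $1$-cells as well. Thus the rightmost square commutes strictly.

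Finally, combining both squares: for any dispace~$\Xr$, the hypothesis on $F$ provides a natural isomorphism of the form $(id,\beta):F(\Xr^\sharp)\dfl F(\Xr)^\flat$, and applying $\Er$ yields an isomorphism $\Er F(\Xr^\sharp)\cong \Er(F(\Xr)^\flat)=\Er F(\Xr)^o$ natural in $\Xr$, which is exactly the time-reversal condition for $\Er\circ F$. I do not anticipate a serious obstacle here: the only subtlety is bookkeeping in the identification $D^\flat_{f^o}=D_f$ and checking that the composition pairings are transported compatibly through $\Fact{^o}$, but since $\Fact{^o}$ simply relabels $(u,v)\mapsto(v^o,u^o)$ while leaving the underlying objects untouched, this bookkeeping is purely formal.
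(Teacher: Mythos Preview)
Your proposal is correct and follows essentially the same approach as the paper: both arguments establish that the rightmost square of diagram~\eqref{E:ThreeCommutations} commutes strictly by comparing the hom-set decompositions of $\Er_{(\Cr,D,\nu)^\flat}$ and $\Er_{(\Cr,D,\nu)}^o$, and then conclude by pasting with the leftmost square, which commutes up to isomorphism by hypothesis. Your write-up is in fact slightly more thorough than the paper's, since you also check the behaviour on morphisms and on the composition pairings, whereas the paper is content to match the hom-sets.
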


\subsection{Relative directed homotopy}
\label{subsec:relhom}

\subsubsection{Relative homotopy}
Let us recall the definition of the relative homotopy groups of a pair of topological spaces.
For $n\geq 1$, let $I^n$ denote the $n$-dimensional unit cube $[0,1]^n$. We single out the face $I^{n-1} := \set{(x_1, \dots , x_n) \hskip.2cm | \hskip.2cm x_n = 0}$, and define $J^{n}$ to be the closure of $\partial I^{n} \setminus I^{n-1}$ in $I^n$. Given a \emph{pointed pair} of topological spaces $(X,A)$, \ie a space $X$ and a subspace $A\subseteq X$ pointed at $x\in A$, we define, for $n\geq 1$, the $n^{th}$ \emph{relative homotopy} of $(X,A)$ by setting
$$\pi_{n}(X,A) := [\map{f}{(I^n, \partial I^n, J^n)}{(X,A,x)} ]$$
\ie the homotopy classes of maps $f:I^n \rightarrow X$ with $f(\partial I^n) \subseteq A$ and $f(J^n) = \set{x}$. The homotopies between such maps must satisfy the same conditions.

Note that for $n=1$, this is not a group for concatenation. Indeed, a map $\map{f}{(I, \set{0,1}, {1})}{(X,A,x)}$ is required to end at $x$, but can start anywhere in $A$, and therefore such maps can in general not be concatenated. We consider $\pi_{1}(X,A) $ as a pointed set, the pointed element being the class of paths $f$ such that $f$ is homotopic to a path $g$ with its image contained in $A$, \ie$g([0,1])\subseteq A$. For $n\geq 2$, $\pi_{n}(X,A)$ forms a group under concatenation, and is abelian for $n\geq 3$. For $f : I^n \fl X$, its class in $\pi_n(X,A)$ is the identity element if, and only if, it is homotopic to a map $g : I^n \fl X$ with its image contained in $A$.

 The assignment of relative homotopy groups to a pointed pair of spaces is functorial. Its domain is the category of pointed pairs of topological spaces, denoted by ${\top_*}_2$, in which a morphism $f:(X,A,x)\fl(Y,B,y)$ is a continuous map $f:X\fl Y$ such that $f(A)\subseteq f(B)$ and $f(x)=y$, and its codomain is $\psetcat$ for $n=1$, $\gp$ for $n=2$ and $\ab$ for $n\geq 3$. We can therefore consider these as functors with values in~$\act$ for all $n\geq 1$. 
 
\subsubsection{Relative directed homotopy}
 
Let us extend relative homotopy to dispaces. A \emph{directed subspace} of a dispace $\Xr=(X,dX)$ is a dispace $\Ar=(A,dA)$ such that $A\subseteq X$ is a subspace and $dA \subseteq dX$.
We define \emph{the category of pairs of dispaces}, denoted $\dtop_2$, as the category having objects $(\Xr , \Ar)$ with $\Ar$ a directed subspace of $\Xr$, and in which a morphism $\varphi : (\Xr, \Ar) \fl (\Yr, \Br)$ is a morphism $\varphi : \Xr \fl \Yr$ of dispaces such that $\varphi(A) \subseteq B$ and $\varphi_*(dA) \subseteq dB$.

For $n\geq 2$, the $n^{th}$ \emph{natural system of relative directed homotopy} associated to the pair $(\Xr,\Ar)$ is the natural system on $\diP{\Ar}$, denoted by $\sysp{n}{\Xr,\Ar}$, sending a dipath $\map{f}{x}{y}$ in $dA$ to the $(n-1)^{th}$ relative homotopy group of the pointed pair $(\trace{\Xr}{x}{y}, \trace{\Ar}{x}{y}, f)$, and whose group homomorphisms induced by extensions $(u,v)$ are defined by concatenation of paths as in \ref{SSS:TraceDiagrams}. 
Using a notion of relative trace diagrams $\dtop_2 \fl {\top_*}_2$ and similar arguments as those in Section~\ref{S:DirectedHomotopyInternalGroup}, it can be shown that, for each~$n\geq 2$, $\sysp{n}{\Xr,\Ar}$ extends to functors
\[
\overrightarrow{P}_{\!n} : \dtop_2 \fl \opnat{\act}.
\]

\subsubsection{Relative homotopy sequence in $\act$}
Grandis shows in {\cite[Theorem 6.4.9]{marco2013homological}} that given a pointed pair of topological spaces $(X,A)$, there is a long exact sequence in $\act$:
\begin{align*}
\cdots 
\: \rightarrow \:
\pi_n (A) \: \rightarrow \: \pi_n (X) \: \rightarrow \: & \pi_{n}(X,A) \: \overset{\partial_n}{\rightarrow} \: \pi_{n-1}(A) \: \rightarrow \: \cdots \\
\cdots \: \overset{v}{\rightarrow} \: \pi_1 (X) \: \overset{f}{\rightarrow} \: (\pi_1(X,A), \pi_1 (X)) & \: \overset{g}{\rightarrow} \: \pi_0(A) \: \overset{h}{\rightarrow} \: \pi_0(X) \: \rightarrow \: \pi_{0}(X,A) \: \rightarrow \: 0.
\end{align*}
Note that these assignment is functorial from ${\top_*}_2$ to the category of long exact sequences in $\act$.
All of the morphisms of this sequence are induced by inclusions, except the last non-trivial homomorphism and the homomorphisms $\partial_n$, which are given by restriction to the distinguished face: $\partial_{n} ([\sigma]) = [\sigma]_{| I^{n-1}}$. Also recall that we have not defined a relative homotopy group for $n=0$; the object $\pi_{0}(X,A)$ is defined to be $\Crok{h}$. It is thus the quotient of the set of path-connected components of $X$ obtained by identifying the components which intersect $A$.

All the terms above $\pi_1(X)$ are groups, and the existence of this long exact sequence in $\gp$ is well known. Since exactness is carried into $\act$, this induces that the sequence is exact in $\act$ up to this object.
As observed above, $\pi_1 (X,A)$ is not a group, but a pointed set. There is a right action of the group $\pi_1(X)$ on this pointed set given by concatenation; the elements of $\pi_1 (X,A)$ have ending point $0_X$, and so we can concatenate with elements of $\pi_1(X)$ on the right. The sequence is exact at $\pi_1 (X) = (|\pi_1(X)|, \pi_1(X))$ since the image of $v$ is precisely $\Kernel{f'}$; indeed, $Fix_{\pi_1 (X)}(0_{\pi_1(X,A)}) = \pi_1(A)$.
The map $g$ sends $(\tau, \sigma) \in (\pi_1(X,A), \pi_1 (X))$ to the path-connected component of $\tau(0)\in A$. We therefore view it is a pointed set map from $\pi_1(X,A)$ to $\pi_0(A)$. The sequence is exact at $(\pi_1(X,A), \pi_1 (X))$ because the antecedents under $g$ of the pointed element of $\pi_0(A)$, namely the component containing the base point $x$, are elements of the orbit of the pointed element $0$ of $\pi_1(X,A)$, \ie $0\cdot \pi_1(X)$. This coincides with the image of $f$ since it is defined by sending $\sigma\in \pi_1(X)$ to $(0\cdot\sigma, \sigma)$.
Lastly, exactness at $\pi_0(A)$ is a consequence of the inverse image under $h$ of the pointed element $[x]$ in $\pi_0(X)$ being exactly $\set{[x]}$, the pointed element in $\pi_0(A)$, since $h$ is induced by the inclusion $A \hookrightarrow X$. Furthermore, for $\tau\in\pi_1(X,A)$, $g(\tau)$ is necessarily in the same path connected component as $x$. Thus, the image of $g$ coincides with the kernel of $h$.

\subsubsection{Natural relative homotopy sequence}
We endow the category $\natsys{\diP{\Ar}}{\act}$ with the structure of a homological category by letting null morphisms be those natural transformations which are null component-wise in $\act$. A sequence of natural systems of actions is then \emph{exact} when it is point-wise exact in $\act$. As a consequence we obtain the following long exact sequence of natural homotopy systems:

\begin{theorem}
\label{T:MainTheoremB}
Let $\Xr$ be a dispace and $\Ar$ be a directed subspace of $\Xr$.
There is an exact sequence in $\natsys{\diP{\Ar}}{\act}$:
\begin{align*}
\cdots 
\: \rightarrow \: \sysp{n}{\Ar} \: \rightarrow \: \sysp{n}{\Xr}\: \rightarrow \: & \sysp{n}{\Xr,\Ar} \: \overset{\partial_n}{\rightarrow} \: \sysp{n-1}{\Ar} \: \rightarrow \: \cdots \\
\cdots \: \rightarrow \: \sysp{2}{\Ar} \: \overset{v}{\rightarrow} \sysp{2}{\Xr} \: \overset{f}{\rightarrow} \: (\sysp{2}{\Xr,\Ar}, \sysp{2}{\Xr}) & \: \overset{g}{\rightarrow} \: \sysp{1}{\Ar} \overset{h}{\rightarrow} \: \sysp{1}{\Xr} \: \rightarrow \: \sysp{1}{\Xr,\Ar} \: \rightarrow \: 0.
\end{align*}
\end{theorem}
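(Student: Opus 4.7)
The plan is to reduce the statement to the classical long exact sequence in $\act$ for a pointed pair of topological spaces (recalled from Grandis just before the theorem), applied pointwise at each $0$-cell of $\Fact{\diP{\Ar}}$, and then to verify naturality in the factorization category $\Fact{\diP{\Ar}}$ in order to assemble these pointwise sequences into a sequence in $\natsys{\diP{\Ar}}{\act}$. Since the homological structure on $\natsys{\diP{\Ar}}{\act}$ is defined componentwise, pointwise exactness will directly imply the exactness claimed in the theorem.

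For the pointwise step, I fix a $0$-cell $f:x\to y$ of $\Fact{\diP{\Ar}}$, i.e., a trace of $\Ar$. Because $\Ar$ is a directed subspace of $\Xr$, there is a canonical inclusion $\trace{\Ar}{x}{y}\hookrightarrow\trace{\Xr}{x}{y}$, yielding a pointed pair of topological spaces with base point $f$. Applying Grandis's classical long exact sequence to this pair produces an exact sequence in $\act$ whose terms are precisely the $f$-components of the natural systems $\sysp{n}{\Ar}$, $\sysp{n}{\Xr}$, and $\sysp{n}{\Xr,\Ar}$ as defined in the paper.

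For the naturality step, I consider a $1$-cell $(u,v)$ of $\Fact{\diP{\Ar}}$ from $f$ to $f'=u\star f\star v$. The concatenation operation $u\star\_\star v:\trace{\Xr}{x}{y}\to\trace{\Xr}{x'}{y'}$ preserves the subspace $\trace{\Ar}{x}{y}$, since $u$ and $v$ are traces of $\Ar$ and $dA$ is closed under concatenation, and so defines a morphism of pointed pairs. By the functoriality of the classical long exact sequence on pointed pairs of topological spaces, all inclusion-induced maps, the action of $\pi_1$ of the ambient space on $\pi_1$ of the pair, and the connecting homomorphisms $\partial_n$ commute with the maps induced by $u\star\_\star v$. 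This identifies each level of morphisms as a natural transformation in $\natsys{\diP{\Ar}}{\act}$, and combined with pointwise exactness yields the theorem.

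The main obstacle is the naturality of the connecting homomorphism $\partial_n$, which amounts to checking that restriction to the distinguished face $I^{n-1}\subset I^n$ commutes with post-composition by $u\star\_\star v$. This reduces to a direct computation: a representative $\sigma:I^n\to\trace{\Xr}{x}{y}$ carries the cube parameter separately from the trace-space coordinate, on which $u\star\_\star v$ acts pointwise, so restriction and concatenation commute strictly. The remaining naturality verifications, including that for the right action of $\sysp{1}{\Xr}$ on $\sysp{1}{\Xr,\Ar}$ at the lowest end of the sequence, follow the same pattern and are routine.
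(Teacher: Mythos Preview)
Your proposal is correct and follows essentially the same approach as the paper: apply Grandis's long exact sequence in $\act$ to the pointed pair $(\trace{\Xr}{x}{y},\trace{\Ar}{x}{y},f)$ at each trace $f$ of $\Ar$, invoke the functoriality of that sequence in pointed pairs to obtain naturality under concatenation $u\star\_\star v$, and conclude by the componentwise definition of exactness in $\natsys{\diP{\Ar}}{\act}$. The paper states this very tersely (it simply notes the functoriality from ${\top_*}_2$ and the pointwise homological structure), whereas you spell out the naturality check for $\partial_n$ and the action term explicitly, but the argument is the same.
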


\subsubsection{Dicontractible subspaces}
A dispace $\Xr$ is called \emph{dicontractible} if all its natural homotopy
functors $\sysp{n}{\Xr}$ are trivial, e.g. are constant functors into a singleton for $n=1$ or a trivial group for $n\geq 2$.
As a consequence, of Theorem~\ref{T:MainTheoremB}, if $\Ar$ is a dicontractible directed subspace of $\Xr$, then we have an isomorphism in $\natsys{\diP{\Ar}}{\gp}$
\[
\sysp{n}{\Xr} \simeq \sysp{n}{\Xr,\Ar},
\]
for all $n\geq 3$.
Note that when $(X,dX)$ is the geometric realization of a finite precubical set, the dicontractibility condition is equivalent to asking that all path spaces are contractible. 

\subsubsection{A long exact fibration sequence in directed topology}
Recall that a morphism $\varphi : \Xr \fl \Yr$ of dispaces induces a natural tranformation $\overrightarrow{\varphi}: \overrightarrow{T}_*(\Xr) \dfl \overrightarrow{T}_*(\Yr)$. We consider morphisms $p : \Er \fl \Br$ of dispaces such that each component $\overrightarrow{p}_e$ is a fibration, for every $e$ a dipath of $\Er$. 

Given such a morphism $p$, we define the associated \emph{natural system of fibres}, denoted $\overrightarrow{T}_*(\lF)$, as the natural system of pointed topological spaces on $\diP{\Er}$ which sends a dipath $e$ to
\[
\overrightarrow{T}(\lF)_e = \left(\overrightarrow{p}_e^{-1}(p(e)) \ , \ e \right).
\]
Now for each $1$-cell $e$ of $\diP{\Er}$, denote by $\sysp{n}{\lF}_{e}$ (resp.~$\sysp{n}{\Er,\lF}_{e}$) the homotopy group (resp.~relative homotopy group)
\[
\pi_{n-1}\left(\overrightarrow{T}(\lF)_{e} \right) \qquad(\text{resp.~}
\pi_{n-1}\left( \overrightarrow{T}(\Er)_{e},\overrightarrow{T}(\lF)_{e} \right)).
\]
These are natural systems on $\diP{\Er}$.
Furthermore, for each $e$ dipath of $\Er$, the sequence
\[
\overrightarrow{T}(\lF)_e \fl \overrightarrow{T}(\Er)_e \fl \overrightarrow{T}(\Br)_{p(e)}
\]
of topological spaces induces a long exact sequence of homotopy groups. Extending this to lower-dimensional homotopy groups via~{\cite[Theorem 6.4.9]{marco2013homological}} yields the following result.

\begin{theorem}
Let $p : \Er \fl \Br$ be a morphism of dispaces inducing (Serre) fibrations $\overrightarrow{p}_{\!e}$ for every $1$-cell $e$ of $\diP{\Er}$. Then we obtain a long exact sequence in $\natsys{\diP{\Er}}{\act}$:
\begin{align*}
\cdots 
\: \rightarrow \:
\sysp{n}{\lF} 
\: \rightarrow \:
\sysp{n}{\Er} 
\: \rightarrow \:
&
\: \sysp{n}{\Er,\lF}
\overset{}{\rightarrow} 
\sysp{n-1}{\lF}
\: \rightarrow \:
\cdots \\
\cdots 
\: \rightarrow \:
\sysp{2}{\lF}
\overset{}{\rightarrow} 
\sysp{2}{\Er}
\overset{}{\rightarrow} 
\left(\sysp{2}{\Er,\lF} \ , \ \sysp{2}{\Er}\right) 
& 
\: \overset{}{\rightarrow} \:
\sysp{1}{\lF} 
\overset{}{\rightarrow} 
\sysp{1}{\Er} 
\: \rightarrow \:
\sysp{1}{\Er,\lF} 
\: \rightarrow \: 0.
\end{align*}

Furthermore, $\sysp{n}{\Er,\lF} \cong p^* (\sysp{n}{\Br})$ for all $n\geq 2$. In particular, when $\overrightarrow{T}_*(\Br)_{p(e)}$ is path connected for all dipaths $e$ of $\Er$, the isomorphism holds for all $n\geq 1$.
\end{theorem}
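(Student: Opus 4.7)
The plan is to obtain the long exact sequence of natural systems by applying, pointwise at each dipath $e$ of $\Er$, the $\act$-valued long exact sequence of a Serre fibration from~{\cite[Theorem~6.4.9]{marco2013homological}}, and then to verify naturality with respect to morphisms of $\Fact{\diP{\Er}}$. The identification $\sysp{n}{\Er,\lF} \cong p^\ast(\sysp{n}{\Br})$ will then follow from the classical fact that, for a Serre fibration $q:E\fl B$ with fibre $F$ over $b$ and $x\in F$, the morphism $q_\ast:\pi_n(E,F,x)\fl\pi_n(B,b)$ is an isomorphism for all $n\geq 2$, and a bijection of pointed sets for $n=1$ when the relevant component of $B$ is path-connected at $b$.

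Fix a $1$-cell $e:x\fl y$ of $\diP{\Er}$. By hypothesis, the continuous map $\overrightarrow{p}_{\!e}:\overrightarrow{T}(\Er)_e\fl\overrightarrow{T}(\Br)_{p(e)}$ is a Serre fibration whose fibre over $p(e)$ is the pointed space $\overrightarrow{T}(\lF)_e$ based at $e$. Applying {\cite[Theorem~6.4.9]{marco2013homological}} to the pointed pair $(\overrightarrow{T}(\Er)_e,\overrightarrow{T}(\lF)_e)$ yields a long exact sequence in $\act$ whose terms coincide by construction with $\sysp{n}{\lF}_e$, $\sysp{n}{\Er}_e$ and $\sysp{n}{\Er,\lF}_e$, and whose low end terminates with the action of $\sysp{2}{\Er}_e$ on the pointed set $\sysp{2}{\Er,\lF}_e$ and the pointed-set morphism to $\sysp{1}{\Er,\lF}_e$, exactly as in the statement.

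I will then verify that these pointwise sequences assemble into an exact sequence of natural systems on $\diP{\Er}$. For a $1$-cell $(u,v):e\fl u\star e\star v$ of $\Fact{\diP{\Er}}$, the concatenation map $u\star\_\star v$ is a continuous map $\overrightarrow{T}(\Er)_e\fl\overrightarrow{T}(\Er)_{u\star e\star v}$; the equality $p(u\star f\star v)=p(u)\star p(f)\star p(v)$ implies that it restricts to $\overrightarrow{T}(\lF)_e\fl\overrightarrow{T}(\lF)_{u\star e\star v}$, so that we obtain a morphism of pointed pairs. Functoriality of relative homotopy and of Grandis' construction then yields a morphism between the corresponding long exact sequences. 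Since exactness in $\natsys{\diP{\Er}}{\act}$ is defined componentwise, the sequence of natural systems is then exact.

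For the identification with $p^\ast\sysp{n}{\Br}$ when $n\geq 2$, I will invoke the classical fact recalled above for the Serre fibration $\overrightarrow{p}_{\!e}$: the induced map $(\overrightarrow{p}_{\!e})_\ast:\pi_{n-1}(\overrightarrow{T}(\Er)_e,\overrightarrow{T}(\lF)_e,e)\fl\pi_{n-1}(\overrightarrow{T}(\Br)_{p(e)},p(e))$ is an isomorphism for every $n\geq 2$. Naturality of this classical isomorphism in maps of fibrations, combined with the fact that concatenation by $u$ and $v$ commutes with $p$ at every level, guarantees naturality in $(u,v)$ and hence a natural isomorphism $\sysp{n}{\Er,\lF}\cong p^\ast(\sysp{n}{\Br})$ of natural systems. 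The strengthening to $n=1$ under the path-connectedness hypothesis follows from the corresponding classical statement, in which the action of $\pi_1$ trivialises on $\pi_0$ of the fibre. The main obstacle I anticipate is purely bookkeeping: keeping the base point $e$ consistent across pointed-pair constructions and checking that the concatenation maps preserve the action structures at the low-dimensional end of the sequence; both reduce to associativity of trace concatenation together with functoriality of $\pi_{n-1}$ and of Grandis' sequence.
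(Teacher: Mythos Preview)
Your proposal is correct and follows the same approach as the paper: the paper's argument is essentially the sentence preceding the theorem, which obtains the long exact sequence pointwise from the fibration $\overrightarrow{T}(\lF)_e\fl\overrightarrow{T}(\Er)_e\fl\overrightarrow{T}(\Br)_{p(e)}$ and invokes {\cite[Theorem~6.4.9]{marco2013homological}} for the low-dimensional extension, leaving both the naturality in $(u,v)$ and the identification $\sysp{n}{\Er,\lF}\cong p^\ast(\sysp{n}{\Br})$ implicit. Your write-up simply makes these points explicit, which is fine.
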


\begin{small}
\renewcommand{\refname}{\Large\textsc{References}}
\bibliographystyle{abbrv}
\bibliography{LaxHomotopy}
\end{small}

\clearpage

\quad

\vfill

\begin{flushright}
\begin{small}
\noindent \textsc{Cameron Calk} \\
\url{cameron.calk@ens-lyon.fr} \\
{LIX, Ecole Polytechnique, CNRS, Universit\'e Paris-Saclay} \\
91128 Palaiseau, France \\
\end{small}
\end{flushright}

\bigskip

\begin{flushright}
\begin{small}
\noindent \textsc{Eric Goubault} \\
\url{goubault@lix.polytechnique.fr} \\
{LIX, Ecole Polytechnique, CNRS, Universit\'e Paris-Saclay} \\
91128 Palaiseau, France \\
\end{small}
\end{flushright}

\bigskip

\begin{flushright}
\begin{small}
\noindent \textsc{Philippe Malbos} \\
\url{malbos@math.univ-lyon1.fr} \\
Univ Lyon, Universit\'e Claude Bernard Lyon 1\\
CNRS UMR 5208, Institut Camille Jordan\\
43 blvd. du 11 novembre 1918\\
F-69622 Villeurbanne cedex, France
\end{small}
\end{flushright}

\vspace{0.25cm}

\begin{small}---\;\;\today\;\;-\;\;\hhmm\;\;---\end{small} \hfill
\end{document}